\newtheorem{thm}{Theorem}[section]
\newtheorem{cor}[thm]{Corollary}
\newtheorem{prop}[thm]{Proposition}
\newtheorem{defin}[thm]{Definition}
\newtheorem{lema}[thm]{Lemma}
\newtheorem{rmk}[thm]{Remark}
\newtheorem{conjecture}[thm]{Conjecture}
\newtheorem{ex}[thm]{Example}
\newcommand{\codim}{\operatorname{codim}}
\newcommand{\chr}{\operatorname{char}}
\def\R{\mathbb{R}}
\def\C{\mathbb{C}}
\def\P{\mathbb{P}}
\def\K{\mathbb{K}}
\def\rk{\operatorname{rk}}
\def\Hess{\operatorname{Hess}}
\def\hess{\operatorname{hess}}
\newcommand{\Hilb}{\operatorname{Hilb}}
\newcommand{\ann}{\operatorname{Ann}}
\newcommand{\Hom}{\operatorname{Hom}}
\newcommand{\rank}{\operatorname{rank}}
\newcommand{\ba}{\mathcal{B}}
\newcommand{\rw}{\Rightarrow}
\begin{document}

\title{On mixed Hessians and the Lefschetz properties}

\author[R. Gondim]{Rodrigo Gondim*}
\address{Universidade Federal Rural de Pernambuco, av. Don Manoel de Medeiros s/n, Dois Irmãos - Recife - PE
52171-900, Brasil}
\email{rodrigo.gondim@ufrpe.br}
\author[G. Zappalà]{Giuseppe Zappalà}
\address{Dipartimento di Matematica e Informatica, Universit\` a degli Studi di Catania, Viale A. Doria 5, 95125 Catania, Italy}
\email{zappalag@dmi.unict.it}

\begin{abstract} We introduce a new type of Hessian matrix, that we call Mixed Hessian. The mixed Hessian is used to compute the rank of a multiplication map by a power of a linear form in a standard graded Artinian Gorenstein algebra.
In particular we recover the main result of \cite{MW} for identifying Strong Lefschetz elements, generalizing it also for Weak Lefschetz elements. This criterion is also used to give a new proof that Boolean algebras have the Strong Lefschetz Property (SLP).
We also construct new examples of Artinian Gorenstein algebras presented by quadrics that does not satisfy the Weak Lefschetz Property (WLP); we construct minimal examples of such algebras and we give bounds, depending on the degree, for their 
existence. 
Artinian Gorenstein algebras presented by quadrics were conjectured to satisfy WLP in \cite{MN1,MN2}, and in a previous paper we construct the first counter-examples (see \cite{GZ}).

\end{abstract}

\thanks{*Partially supported  by the CAPES postdoctoral fellowship, Proc. BEX 2036/14-2}

\maketitle

\section*{Introduction}

The Hessian matrix of a form is the matrix of its second derivatives and its Hessian is the determinant of this matrix. The first instance of such object goes back to the seminal paper of Gauss \cite{Ga}.
In this context the Hessian describes curvature for surfaces given by an implicit function, see also Segre \cite{Se} for the $n$-dimensional analog.
Complete hypersurfaces with zero Gaussian curvature are also called developable.
 We recall that for $X=V(f)\subset \P_{\K}^N$, a hypersurface defined over $\K=\R,\C$, we get $\hess_f = 0 \pmod{f}$ if and only if the hypersurface is developable, that is, the Gauss map is degenerated. 
In $\P^3$ only cones and the tangent surface of a curve are developable. While the cones have $\hess_f=0$ the tangent surfaces have $\hess_f \neq 0$ (see \cite[Chapter 7]{Ru}).

Hesse claimed in \cite{He} that for arbitrary $N$, $\hess_f=0$ if and only if $X=V(f)\subset \P^N$ is a cone. Gordan and Noether in the fundamental paper \cite{GN} showed that Hesse's claim is true for $N \leq 3$ and 
they produced series of counterexamples for $N \geq 4$.  Moreover, these counterexamples can be characterized as the only hypersurfaces in $\P^4$ with vanishing hessian which are not cones. 
A modern proof of this fact can be found in \cite{GR} while a very detailed account on the subject appears in \cite[Chapter 7]{Ru}. The so called Gordan-Noether Theory is also 
treated in very different aspects in \cite{CRS, Lo, GR, GRu, Wa2, Ru}. \par

Hessians of higher degree were introduced in \cite{MW} and used to control the so called 
Strong Lefschetz property (SLP). This property for a Graded Artinian Gorenstein algebra was inspired by the Hard Lefschetz Theorem on the cohomology of 
smooth projective complex varieties. In this paper we introduce the mixed Hessians, that generalize the Hessians of higher order, providing a generalization 
of the criterion for Strong Lefschetz elements also for Weak Lefschetz elements (see Theorem \ref{thm:generalization} and Corollary \ref{cor:hessiancriteria}). \par

The Lefschetz properties have attracted a great deal of attention over the years, since they are {\it phenomena} connected with Commutative Algebra, Algebraic and Tropical Geometry and Combinatorics, 
see \cite{St,St2,HMMNWW,HMNW, Go, GZ}. 
The first result in the area, proved by Stanley \cite{St} and independently by Watanabe, asserts that a complete intersection of monomials have the SLP. Here we (re)prove a special case of this result for quadratic complete intersections of monomials, 
also called Boolean algebras. \par

A standard graded $\K$ algebra is said to be presented by quadrics if it is isomorphic to the quotient of a polynomial ring over $\K$ by a homogeneous ideal 
generated by quadratic forms. These algebras are related with Koszul algebras and Gr\"obner basis, see for example \cite{Co}. 
In \cite{GZ} we disprove a conjecture posed in \cite{MN2} that Artinian Gorenstein algebras presented by quadrics have the WLP. 
Here we study in more details the family introduced in \cite{GZ} to give minimal examples for those algebras failing WLP. \par

We now describe the contents of the paper in more detail. In the first section we recall the basic definitions and constructions of standard graded Artinian Gorenstein algebras, 
and we recall a combinatoric construction introduced in \cite{GZ}. \par

In the second section we introduce the mixed Hessians and prove the main result, Theorem \ref{thm:generalization}, a generalization of the Hessian criterion to mixed Hessians, see also Corollary \ref{cor:hessiancriteria}.
In the third section we prove an inductive construction (see Proposition \ref{prop:inductiveconstruction}) whose Corollary is the very well known fact that Boolean algebras have the SLP (see Corollary \ref{cor:booleanalgebras}). \par

The next section is devoted to recall a combinatorial construction introduced in \cite{GZ}, we associate a pure simplicial complex to a standard graded Artinian Gorenstein algebra. A special family called Turan algebras have been used in \cite{GZ} to 
produce counterexamples to the conjecture posed in \cite{MN1,MN2}. The conjecture was that Artinian Gorenstein presented by quadrics have WLP. \par  

In the last section we deal with algebras presented by quadrics of minimal codimension failing the WLP. For degree $d=3$ we find the minimal example in codimension $8$ (see Example \ref{ex:semplicissimo}). We also classify algebras associated to graphs 
with respect to WLP (see Proposition \ref{prop:maincubic}). Applying the inductive construction we get a lower bound for the 
codimension of algebras of odd degree to fail the WLP (see Corollary \ref{cor:existemimpares}); this bound is relatively sharp. For even degrees we also give a bound for the failure of the WLP, Corollary \ref{cor:existempares}.

\section{Artinian Gorenstein algebras and the Lefschetz properties}

\subsection{Lefschetz properties}

Let $\K$ be an infinite field and  $R=\K[x_1,\ldots,x_n]$ be the polynomial ring in $n$ indeterminates. 


Let $A=R/I$ be an Artinian standard graded $R$-algebra, then $A$ has a decomposition $A=\displaystyle \bigoplus_{i=0}^dA_i,$ as a sum of finite dimensional $\K$-vector spaces with $A_d\ne 0$.

Let $A=R/I$ be an Artinian standard graded $R$-algebra. 
A form $F\in R_d$ induces a $\K$-vector spaces map $\mu_{i,F}:A_i\to A_{i+d},$ defined by $\mu_{i,F}(\alpha)=F\alpha,$ for every $\alpha\in A_i.$

\begin{defin}\rm
We say that $A$ has the {\em Strong Lefschetz property} (in short SLP) if there exists a linear form $L\in R_1$ such that
$\rk\mu_{i,L^k}=\min\{\dim_{\K} A_i,\dim_{\K} A_{i+k}\},$ for every $i,k.$ 
\end{defin}

\begin{defin}\rm
We say that $A$ has the {\em Weak Lefschetz property} (in short WLP) if there exists a linear form $L\in R_1$ such that
$\rk\mu_{i,L}=\min\{\dim_{\K} A_i,\dim_{\K} A_{i+1}\},$ for every $i.$ 
\end{defin}


\begin{defin}\rm 
 Let $R=\K[x_1,\ldots,x_n]$ and $A=R/I$ be an Artinian standard graded $R$-algebra, with $I_1=0$. The integer $n$ is said to be the codimension of $A$. 
 If $A_d \neq 0$ and $A_i=0 $ for all $i > d$, then $d$ is called the socle degree of $A$. The Hilbert vector of $A$ is $h_A=\Hilb(A)=(1,h_1,h_2,\ldots,h_d)$, where 
 $h_k = \dim A_k$. We say that $h_A$ is unimodal if there exists $k$ such that $1\leq h_1\leq \ldots \leq h_k \geq h_{k+1}\geq h_d$.
\end{defin}

\begin{rmk}\rm \label{rmk:simplifica}
We recall that an Artinian algebra $A=\displaystyle \bigoplus_{i=0}^dA_i,$ $A_d\ne 0,$ is a {\em Gorenstein algebra} if and only if $\dim_{\K} A_d=1$ and the bilinear pairing 
$$A_i\times A_{d-i}\to A_d$$ induced by the multiplication is non-degenerated for $0\le i\le d$. 
So we have an isomorphism $A_{i}\simeq\Hom_{\K}(A_{d-i},A_d)$ for $i=0,\ldots,d$.
In particular, $\dim_{\K} A_i=\dim_{\K} A_{d-i},$ for $i=0,\ldots,d$.
Moreover, for every $L\in R_1,$ $\rank\mu_{i,L}=\rank\mu_{d-i-1,L},$ for $0\le i\le d.$
\par
Since $A$ is generated in degree $0$ as a $R$-module,  if $\mu_{i,L}$ is surjective, then $\mu_{j,L}$ is surjective for every $j\ge i.$ 
Therefore, if $A$ is a Gorenstein Artinian algebra, if $\mu_{i,L}$ is injective, then $\mu_{j,L}$ is injective for every $j\le i.$ 
Of course SLP implies WLP. Notice also that the WLP implies the unimodality of the Hilbert vector of $A$. Unimodality in the Gorenstein case implies that $\dim A_{k-1} \leq \dim A_k$ for all $k \leq \frac{d}{2}$.
The converse of these implications are not true, (see \cite{Go}).
\end{rmk}

\par

\subsection{Macaulay-Matlis duality}

From now on we assume that $\chr\K=0.$ Let us regard the polynomial algebra $R$ as a module over the algebra $Q=\K[X_1, . . . ,X_n]$ via the identification $X_i = \partial/\partial x_i.$ If $f\in R$ we set
 $$\ann_Q(f)=\{p(X_1,\ldots,X_n)\in Q\mid p(\partial/\partial x_1,\ldots,\partial/\partial x_n)f=0\}.$$
 It is well known that $A=Q/I$ is a Gorenstein standard graded Artinian algebra if and only if there exists a form $f\in R$ such that $I=\ann_Q(f)$ (for more details see, for instance, \cite{MW}).

In the sequel we always assume that $\operatorname{char}(\K)=0$, $A=Q/I$, $I=\ann_Q(f)$ and $I_1=0$. 

When we deal with standard bigraded Artinian Gorenstein algebras $A=\displaystyle \bigoplus_{i=0}^d A_i,$ $A_d\ne 0$, 
with $A_k=\displaystyle \bigoplus_{i=0}^k A_{(i,k-1)}$, $A_{(d_1,d_2)}\ne 0$ for some $d_1,d_2$ such that $d_1+d_2=d$, we call $(d_1,d_2)$ the socle bidegree of $A$. Since $A^*_k \simeq A_{d-k} $ and since duality is compatible with 
direct sum, we get $A_{(i,j)}^* \simeq A_{(d_1-i,d_2-j)}$. \\
In this case given a presentation of $A = Q/\ann_f$ with $R=\K[x,u]$ and $Q = \K[X,U]$ standard bigraded, we get $I=\ann_f$ a bihomogeneous ideal. It is easy to see that the Macaulay dual of the defining ideal is $f \in R_{(d_1,d_2)}$ a bihomogeneous polynomial of total degree $d=d_1+d_2$.


\begin{defin}\rm \label{defin:bigraded1}
 With the previous  notation, all bihomogeneous polynomials of bidegree $(1,d-1)$ can be written in the form
 $$f= x_1g_1+\ldots+x_ng_n,$$
 where $g_i \in \K[u_1,\ldots,u_m]_{d-1}$. We say that $f$ is of {\em  square-free monomial type} if all 
 $g_i$ are square free monomials. The associated algebra, $A = Q/\ann_Q(f)$, is bigraded, has socle bidegree $(1,d-1)$ and we assume that $I_1=0,$ so $\codim A = m+n$.
\end{defin}

\section{Mixed Hessians and dual mixed Hessians}
Let $R=K[x_1,\ldots,x_n]$ and $Q=K[\partial/\partial x_1,\ldots,\partial/\partial x_n].$ Let $f\in R_d.$
Let $A=Q/\ann(f)$ be a standard graded Artinian Gorenstein $K$-algebra, 
   $$A=A_0\oplus\ldots\oplus A_d,\,\,\,\dim_K A_d=1.$$ 
Let $k\le l$ two integers, take $L\in A_1$ and let us consider the $K$-vector spaces map
    $$\mu_L: A_k\to A_l,\,\,\mu_{L}(\alpha)=L^{l-k}\alpha.$$
Let $\ba_k=(\alpha_1,\ldots,\alpha_r)$ be a $K$-linear basis of $A_k$ and 
$\ba_l=(\beta_1,\ldots,\beta_s)$ be a $K$-linear basis of $A_l.$

\begin{defin}\rm
We call mixed Hessian of $f$ of mixed order $(k,l)$ with respect to the basis $\ba_k$ and $\ba_l$ the matrix: 
  $$\Hess_f^{(k,l)}:=[ \alpha_i\beta_j(f)]$$
Moreover, we define $\Hess_f^k=\Hess_f^{(k,k)}$, $\hess_f^k = \det(\Hess_f^k)$ and $\hess_f=\hess_f^1$.
  \end{defin}

Now we consider the unique generator $\vartheta\in A_d,$ such that $\vartheta(f)=1.$
So we can define the dual basis in $\Hom_K(A_l,A_d),$ $\ba^*_l=(\beta^*_1,\ldots,\beta^*_s),$ 
in the following way
  $$\beta^*_i(\beta_j)=\delta_{ij}\vartheta.$$
Since $A$ is Gorenstein, the multiplication induces a non-degenerate bilinear map $A_l\times A_{d-l}\to A_d,$ so
we have an isomorphism $\varphi:A_{d-l}\to\Hom_K(A_l,A_d),$ defined by $\varphi(\gamma)(\beta)=\gamma\beta.$ 
In particular we have $\varphi^{-1}(\beta^*_i)=\vartheta/\beta_i\in A_{d-l}.$

\begin{defin}\rm
We call dual mixed Hessian matrix the matrix 
  $$\Hess_f^{(l^*,k)}:=[(\vartheta/\beta_i)\alpha_j(f)]$$
\end{defin}
Note that $\Hess_f^{(l^*,k)} \in (R_{l-k})^{s,r}.$ 

\begin{rmk}\label{rmk:rem1}\rm First of all, since we are interested only in the rank of such matrices, the dependence on the basis is not relevant. \par
Therefore, it is easy to see that $\rk \Hess_f^{(l^*,k)}=\rk \Hess_f^{(n-l,k)}$.\par
 We observe that, under the natural assumption that $\ann_Q(f)_1 \neq 0$, the notation $\hess_f$ is consistent with the classical definition of Hessian, by taking $B_1=\{X_1,\ldots,X_n\}$, 
 the standard basis of the embedding.\par
 Moreover, the notation is also compatible with the Definition of higher order Hessians given in \cite{MW}.\\
 If $A$ is bigraded, and if $B_k=\{\alpha_1,\ldots,\alpha_s\}$ and $B_l=\{\beta_1,\ldots,\beta_t\}$ are bases of the $\K$-vector spaces $A_{(k,l)}$ and $A_{(k',l')}$ respectively,
we can also define $\Hess_f^{((k,l),(k',l'))}=(\alpha_i(\beta_j(f)))_{s \times t}$.
\end{rmk}

\par
If $L=a_1\partial/\partial x_1+\ldots+a_n\partial/\partial x_n,$ 
we set $L^{\perp}=(a_1,\ldots,a_n).$ 

\begin{thm}\label{thm:generalization}
With the previous notation, let $M$ be the matrix associated to the map $\mu_L:A_k \to A_l$ with respect 
to the bases $\ba_k$ and $\ba_l.$ Then
    $$M=(l-k)!\Hess_f^{(l^*,k)}(L^{\perp}).$$
\end{thm}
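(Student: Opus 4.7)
The plan is to compute the $(i,j)$-entry of $M$ directly using the Gorenstein pairing, then transport the power $L^{l-k}$ across the $Q$-action using commutativity, and finally reduce to the elementary identity $L^{m}(g) = m!\, g(L^{\perp})$ for a homogeneous polynomial $g \in R_{m}$.

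First I would recover the entries of $M$ via the dual basis. By definition of $M$, we have $L^{l-k}\alpha_j = \sum_{i'} M_{i'j}\beta_{i'}$ in $A_l$. Applying the functional $\beta_i^{*} = \varphi(\vartheta/\beta_i)$ to both sides, i.e.\ multiplying by $\vartheta/\beta_i \in A_{d-l}$ and evaluating on $f$, the relation $(\vartheta/\beta_i)\beta_{i'}(f) = \delta_{ii'}\vartheta(f) = \delta_{ii'}$ collapses the sum and yields
$$M_{ij} = \bigl(\vartheta/\beta_i\bigr)\,L^{l-k}\,\alpha_j\,(f).$$

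Next I would use the fact that $Q = \K[\partial/\partial x_1,\ldots,\partial/\partial x_n]$ is a commutative polynomial algebra, so $(\vartheta/\beta_i)$, $L^{l-k}$ and $\alpha_j$ commute as operators on $R$. Thus I can first apply $(\vartheta/\beta_i)\alpha_j \in A_{d-l+k}$ to $f$, obtaining the polynomial
$$g_{ij} := (\vartheta/\beta_i)\alpha_j(f) \in R_{l-k},$$
which is exactly the $(i,j)$-entry of $\Hess_f^{(l^{*},k)}$. Then $M_{ij} = L^{l-k}(g_{ij})$.

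Finally I would invoke the basic identity: if $g\in R_m$ is homogeneous of degree $m$ and $L = \sum a_i\,\partial/\partial x_i$, then
$$L^{m}(g) = m!\,g(a_1,\ldots,a_n) = m!\,g(L^{\perp}).$$
This is just the multinomial expansion of $L^m$ combined with $(\partial/\partial x)^{\alpha}(x^{\beta}) = \alpha!\,\delta_{\alpha\beta}$ on monomials, and extended by linearity. Applying this with $m = l-k$ and $g = g_{ij}$ gives
$$M_{ij} = (l-k)!\,g_{ij}(L^{\perp}) = (l-k)!\,\Hess_f^{(l^{*},k)}(L^{\perp})_{ij},$$
which is the desired formula.

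There is no real obstacle here: the only non-formal ingredient is the evaluation identity $L^{m}(g) = m!\,g(L^{\perp})$, and the conceptual point is simply that the Gorenstein duality lets one read off the matrix entries of $\mu_L$ by pairing with the dual basis, after which the commutativity of $Q$ makes the derivatives slide through and the homogeneity of $g_{ij}$ converts a differential action into evaluation at $L^{\perp}$.
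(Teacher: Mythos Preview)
Your proof is correct and follows essentially the same line as the paper's: extract the matrix entry via the dual basis $\beta_i^* = \vartheta/\beta_i$, commute $L^{l-k}$ past $(\vartheta/\beta_i)\alpha_j$, and then apply the evaluation identity $L^{m}(g)=m!\,g(L^{\perp})$ for $g\in R_m$. The only difference is that you spell out the commutativity of $Q$ and the multinomial justification for the evaluation identity a bit more explicitly than the paper does.
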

\begin{proof} First of all note that if $g\in R_h$ then $L^h(g)=h!g(L^{\perp}).$\\
Let $M=(b_{ij}).$ Then 
  $$L^{l-k}\alpha_j=\sum_{h=1}^s b_{hj}\beta_h.$$
Consequently 
 $$\beta^*_i(L^{l-k}\alpha_j)=\sum_{h=1}^s b_{hj}\beta^*_i(\beta_h) \rw 
(\vartheta/\beta_i)L^{l-k}\alpha_j=b_{ij}\vartheta \rw L^{l-k}(\vartheta/\beta_i)\alpha_j=b_{ij}\vartheta.$$
Now we evaluate in $F$ 
 $$L^{l-k}(\vartheta/\beta_i)\alpha_j(f)=b_{ij}\vartheta(f) \rw
    (l-k)!(\vartheta/\beta_i)\alpha_j(f)(L^{\perp})=b_{ij}.$$
\end{proof}

 The previous results give us a generalization of \cite[Theorem 4]{Wa1} and \cite[Theorem 3.1]{MW}.

\begin{cor}{\bf (Hessian criteria for Strong and Weak Lefschetz elements)}\label{cor:hessiancriteria}

Let $A = Q/\operatorname{Ann}_Q(f)$ be a standard graded Artinian Gorenstein algebra of codimension $r$ and socle degree $d$ and let $L = a_1x_1+\ldots+a_rx_r\in A_1$, such that $f(a_1,\ldots,a_r)\neq 0$. 
The map $\mu_{L^{l-k}}: A_k \to A_l$, for $k < l \leq \frac{d}{2}$, has maximal rank if and only if the (mixed) Hessian matrix $\Hess_f^{(k,d-l)}(a_1,\ldots,a_r)$ has 
maximal rank. 
In particular, we get the following:
\begin{enumerate}
 \item {\bf (Strong Lefschetz Hessian criterion, \cite{Wa1}, \cite{MW})} $L$ is a strong Lefschetz element of $A$ if and only if 
$\hess^k_f(a_1,\ldots, a_r)\neq 0$ for all $k=1,\ldots, [d/2]$.
\item {\bf (Weak Lefschetz Hessian criterion)} $L \in A_1$ is a weak Lefschetz element of $A$ if and only if either 
$d=2q+1$ is odd and $\hess^q_f(a_1,\ldots, a_r)\neq 0$ or $d=2q$ is even and $\Hess^{(q-1,q)}_f(a_1,\ldots, a_r)$ has maximal rank.
\end{enumerate}
\end{cor}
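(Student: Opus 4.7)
The plan is to deduce the corollary directly from Theorem \ref{thm:generalization} combined with Remark \ref{rmk:rem1} and the symmetry/propagation facts collected in Remark \ref{rmk:simplifica}. Since $\chr \K = 0$ makes the scalar $(l-k)!$ invertible, Theorem \ref{thm:generalization} says that $\mu_{L^{l-k}}\colon A_k \to A_l$ has maximal rank if and only if $\Hess_f^{(l^*,k)}(L^\perp)$ does, and by Remark \ref{rmk:rem1} the rank of $\Hess_f^{(l^*,k)}$ equals that of $\Hess_f^{(d-l,k)}$, hence (up to transposition) of $\Hess_f^{(k,d-l)}$. This proves the first sentence of the corollary.

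For part (1), the plan is to invoke the standard reduction that $L$ is a strong Lefschetz element if and only if every central multiplication $\mu_{L^{d-2k}}\colon A_k \to A_{d-k}$ is an isomorphism for $0 \le k \le [d/2]$. The nontrivial direction uses Remark \ref{rmk:simplifica}: assuming each central map is bijective, an arbitrary $\mu_{L^{l-k}}\colon A_k \to A_l$ with (after Gorenstein reduction) $k \le l \le d/2$ factors a bijective $\mu_{L^{d-2k}}$, forcing $\mu_{L^{l-k}}$ to be injective; downward propagation of injectivity and upward propagation of surjectivity then promote this to maximal rank for every $\mu_{L^{l-k}}$. Applying the first paragraph with $l = d-k$ so that $d-l = k$ identifies the matrix with the square matrix $\Hess_f^{(k,k)}(L^\perp) = \Hess_f^k(L^\perp)$, whose maximality is equivalent to $\hess_f^k(L^\perp) \ne 0$.

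For part (2), the plan is to use the symmetry $\rk \mu_{i,L} = \rk \mu_{d-i-1,L}$ together with Remark \ref{rmk:simplifica} to reduce WLP to the maximality of a single middle map. If $d = 2q+1$, this map is $\mu_L\colon A_q \to A_{q+1}$ between equidimensional spaces, and Theorem \ref{thm:generalization} with $k=q$, $l=q+1$ identifies its matrix (scalar $1!=1$, and $d-l=q$) with $\Hess_f^{(q,q)}(L^\perp) = \Hess_f^q(L^\perp)$, giving the criterion $\hess_f^q(L^\perp) \ne 0$. If $d = 2q$, the middle map is $\mu_L\colon A_{q-1} \to A_q$, and the same theorem with $k = q-1$, $l = q$, $d-l = q$ identifies its matrix with $\Hess_f^{(q-1,q)}(L^\perp)$ up to transposition; again maximal rank is the stated condition.

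The main obstacle is not the Hessian identification, which is handed to us by Theorem \ref{thm:generalization}, but rather the careful bookkeeping in the reduction to central or middle maps via Remark \ref{rmk:simplifica}. In the even WLP case in particular, one must handle the two subcases $\dim A_{q-1} \le \dim A_q$ and $\dim A_{q-1} > \dim A_q$ separately — in the first, maximal rank of the middle map means injectivity (which propagates down) and forces surjectivity of the dual $\mu_L\colon A_q \to A_{q+1}$ (which propagates up), while in the second the roles are reversed — and then one must verify that the propagated (in)jectivity indeed coincides with maximality of rank at each intermediate degree.
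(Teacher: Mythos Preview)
Your proposal is correct and follows essentially the same route as the paper: derive the rank identity $\rk \mu_{L^{l-k}} = \rk \Hess_f^{(k,d-l)}(L^\perp)$ from Theorem~\ref{thm:generalization} and Remark~\ref{rmk:rem1}, then read off (1) and (2). The paper is in fact terser---after recording the chain of rank equalities it simply declares the remaining claims ``a direct consequence''---whereas you spell out the reduction to central/middle maps via Remark~\ref{rmk:simplifica}.
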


\begin{proof}

Let $\mu:A_k \to A_l$ be the map defined by the multiplication by $L^{l-k}.$ By Theorem \ref{thm:generalization}, 
$$\rk\mu=\rk\Hess_f^{(l^*,k)}(L^{\perp})=\rk\Hess_f^{(l^*,k)}(a_1,\ldots,a_r)=\rk\Hess_f^{(k,d-l)}(a_1,\ldots,a_r),$$ (see also Remark \ref{rmk:rem1}).
\par
The other claims are a direct consequence of it.


\end{proof}



\section{An inductive construction}

In this section we want to study the relations between the algebras $A=Q/\ann_f$ and $\tilde{A}=\tilde{Q}/\ann_{\tilde{f}}$ with $f \in 
 R = \K[x_1,\ldots,x_r]$ and $\tilde{f} = uf \in \tilde{R}  = \K[x_1,\ldots,x_r,u]$. As a Corollary we prove that Boolean algebras have the SLP. 
 This result have been proved in a number of diferent ways, it was the genesis of the area with the work of R. Stanley and J. Watanabe.

\begin{lema}\label{lema:ideal}
 Let $f \in 
 R = \K[x_1,\ldots,x_r]$ be a homogeneous polynomial of degree $d$ and let $\tilde{f} = uf \in \tilde{R}  = \K[x_1,\ldots,x_r,u]$. Let $Q$ and $\tilde{Q}$ be the rings of differential operators associated to 
 $R$ and $\tilde{R}$ respectively. Then
 $$\ann_{\tilde{Q}}(\tilde{f}) = \ann_Q(f)\tilde{Q}+U^2\tilde{Q} \subset \tilde{Q}.$$
 In particular, if $A=Q/\ann_Q(f)$ is presented by quadrics, then $\tilde{A}=\tilde{Q}/\ann_{\tilde{Q}}(\tilde{f})$ is also presented by quadrics.
 \end{lema}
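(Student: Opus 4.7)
The plan is to prove the two inclusions separately, with the forward inclusion being the routine direction and the reverse requiring a decomposition of operators by $u$-degree.

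For the easy direction $\supseteq$, I would observe two things. First, any $p \in \ann_Q(f)$, viewed inside $\tilde Q$, treats $u$ as a constant under differentiation (since it only involves $X_1,\ldots,X_r$), so $p(\tilde f) = p(uf) = u \cdot p(f) = 0$, giving $\ann_Q(f)\tilde Q \subseteq \ann_{\tilde Q}(\tilde f)$. Second, because $\tilde f = uf$ is linear in $u$, we have $U^2(\tilde f) = 0$, so $U^2 \tilde Q \subseteq \ann_{\tilde Q}(\tilde f)$.

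For the non-trivial inclusion $\subseteq$, I would take an arbitrary $p \in \ann_{\tilde Q}(\tilde f)$ and decompose it uniquely as $p = \sum_{i \ge 0} U^i p_i$ with $p_i \in Q$. Modulo $U^2\tilde Q$ we may replace $p$ by $p_0 + U p_1$. Applying this reduced operator to $\tilde f = uf$ and using $U(uf) = f$ together with the fact that $p_0, p_1 \in Q$ commute with multiplication by $u$ (they differentiate only in the $x_i$), I would compute
$$p(\tilde f) \equiv p_0(uf) + U p_1(uf) = u \, p_0(f) + p_1(f) \pmod{U^2 \tilde Q \cdot \tilde f}.$$
Since $U^2 \tilde Q$ kills $\tilde f$, the right-hand side must vanish in $\tilde R$. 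Now $u\,p_0(f)$ has $u$-degree $1$ while $p_1(f)$ has $u$-degree $0$, so by comparing coefficients in the $u$-expansion we obtain $p_0(f) = 0$ and $p_1(f) = 0$, i.e.\ $p_0, p_1 \in \ann_Q(f)$. This gives $p \in \ann_Q(f)\tilde Q + U^2\tilde Q$, as required.

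For the ``In particular'' statement, if $\ann_Q(f)$ is generated by quadrics and $(\ann_Q(f))_1 = 0$, then the identity just proved shows that $\ann_{\tilde Q}(\tilde f)$ is generated by those same quadrics together with the quadric $U^2$; no linear generators are introduced (the only new piece $U^2\tilde Q$ starts in degree $2$), so $\tilde A$ is also presented by quadrics with $I_1 = 0$.

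The only conceptual point that needs care is the commutation of the operators $p_i \in Q$ with multiplication by $u$ when acting on $\tilde R$; once this is observed, the rest is bookkeeping in the $u$-grading. I expect no real obstacles beyond being careful that the decomposition $p = \sum U^i p_i$ is unique so that the coefficient comparison $p_0(f) = p_1(f) = 0$ is legitimate.
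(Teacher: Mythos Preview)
Your proposal is correct and follows essentially the same approach as the paper: both prove the easy inclusion by noting that $\ann_Q(f)$ and $U^2$ annihilate $\tilde f$, and for the reverse inclusion both reduce modulo $U^2\tilde Q$ to an expression $p_0 + U p_1$ with $p_0,p_1 \in Q$, apply it to $\tilde f = uf$, and separate by $u$-degree to conclude $p_0,p_1 \in \ann_Q(f)$. Your write-up is in fact slightly more careful than the paper's (which contains a minor slip, writing $\beta(f)+U\gamma(f)$ where $u\beta(f)+\gamma(f)$ is meant), and your explicit remark about the commutation of $p_i \in Q$ with multiplication by $u$ is a welcome clarification.
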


\begin{proof}
 It is easy to see that if $\alpha \in \ann_Q(f)$, then $\alpha \in \ann_{\tilde{Q}}(\tilde{f})$, and also $U^2 \in \ann_{\tilde{Q}}(\tilde{f})$, hence 
 $I=\ann_Q(f)\tilde{Q}+U^2\tilde{Q} \subset \ann_{\tilde{Q}}(\tilde{f})$. To prove the equality, let $\overline{\alpha} \in \ann_{\tilde{Q}}(\tilde{f})/I$, then we can write:
 $$\overline{\alpha}=\overline{\beta}+U\overline{\gamma}.$$
 Where $\beta, \gamma \in Q/I \subset \tilde{Q}/I$. Therefore:
 $$\alpha(\tilde{f})=\beta(f)+U\gamma(f)=0.$$
 Which give us $\overline{\beta}=\overline{\gamma}=0$, hence $\overline{\alpha}=0$ and the result follows.
\end{proof}

\begin{lema}\label{lema:algebra}
 With the previous notation we have the following decomposition as $\K$ vector spaces: 
 $$\tilde{A}_k=A_k \oplus A_{k-1}U.$$
 \end{lema}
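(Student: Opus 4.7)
The plan is to leverage the explicit description of $\ann_{\tilde{Q}}(\tilde{f})$ provided by Lemma \ref{lema:ideal} and then carry out a bookkeeping of graded components. I would first observe that as a $\K$-vector space the polynomial ring $\tilde{Q} = Q[U]$ decomposes as $\tilde{Q} = Q \oplus UQ \oplus U^2 \tilde{Q}$. Since $U^2 \tilde{Q}$ lies in the annihilator, passing to the quotient $\tilde{Q}/U^2\tilde{Q}$ gives a first reduction, yielding $Q \oplus UQ$ as a model for $\tilde{A}$ before imposing the remaining relations coming from $\ann_Q(f)\tilde{Q}$.

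Next, I would analyze the image of $\ann_Q(f)\tilde{Q}$ in $Q \oplus UQ$. Any element of $\ann_Q(f)\tilde{Q}$ is a polynomial in $U$ with coefficients in $\ann_Q(f)$, so modulo $U^2 \tilde{Q}$ it takes the form $\beta + U\gamma$ with $\beta, \gamma \in \ann_Q(f)$. Consequently the ideal $\ann_{\tilde{Q}}(\tilde{f})$ projects onto $\ann_Q(f) \oplus U\ann_Q(f)$ inside $Q \oplus UQ$, and therefore
$$
\tilde{A} \;=\; \tilde{Q}/\ann_{\tilde{Q}}(\tilde{f}) \;\cong\; \bigl(Q/\ann_Q(f)\bigr) \oplus U\bigl(Q/\ann_Q(f)\bigr) \;=\; A \oplus UA
$$
as $\K$-vector spaces.

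Finally, I would take graded pieces. Since $U$ has degree $1$, the summand $UA$ contributes $U A_{k-1}$ in degree $k$, yielding the desired decomposition $\tilde{A}_k = A_k \oplus U A_{k-1}$.

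The main obstacle, which is rather mild, is justifying that the sum is genuinely direct after quotienting; this is exactly what the computation in the proof of Lemma \ref{lema:ideal} ensures, since an element of $\ann_{\tilde{Q}}(\tilde{f})$ written as $\beta + U\gamma$ with $\beta, \gamma \in Q$ forces both $\beta$ and $\gamma$ to lie in $\ann_Q(f)$. So no fresh work is needed beyond invoking that lemma and tracking degrees.
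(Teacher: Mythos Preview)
Your proposal is correct and follows essentially the same approach as the paper: both arguments rest entirely on Lemma \ref{lema:ideal} and the observation that modulo $U^2$ every element splits uniquely into a $U^0$-part and a $U^1$-part, each of which lies in $\ann_Q(f)$ exactly when the original element lies in $\ann_{\tilde{Q}}(\tilde{f})$. The only cosmetic difference is that the paper phrases the argument in terms of explicit bases of $A_k$ and $A_{k-1}$ (checking linear independence and spanning of $\{\beta_1,\ldots,\beta_s,U\gamma_1,\ldots,U\gamma_l\}$ directly), whereas you work with the global decomposition $\tilde{A}\cong A\oplus UA$ first and then restrict to a graded piece; the content is identical.
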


\begin{proof}
 Let $\{\beta_1,\ldots,\beta_s\} \subset A_k$ be a $\K$-basis of $A_k$ and let $\{\gamma_1,\ldots,\gamma_l\} \subset A_{k-1}$ be a $\K$-basis of $A_{k-1}$. 
 We claim that $\{\beta_1,\ldots,\beta_s,U\gamma_1,\ldots,U\gamma_l\} \subset \tilde{A}_k$ is a $\K$-basis of $\tilde{A}_k$. 
 \begin{enumerate}
  \item[(i)] Linear independence. Suppose that $$b_1\beta_1+\ldots+b_s\beta_s+c_1U\gamma_1+\ldots+c_lU\gamma_l=0.$$  
  Hence, by Lemma \ref{lema:ideal} $b_1\beta_1+\ldots+b_s\beta_s=0$ implying $b_1=\ldots=b_s=0$, in the same way $c_1U\gamma_1+\ldots+c_lU\gamma_l=0$ implying $c_1=\ldots=c_l=0$.
  \item[(ii)] Spanning. Let $\alpha \in \tilde{A}_k$, by Lemma \ref{lema:ideal}, $\alpha= \beta +U\gamma$, with $\beta \in A_k$ and $\gamma \in A_{k-1}$. Therefore 
  $\beta = b_1\beta_1+\ldots+b_s\beta_s$ and $\gamma = c_1U\gamma_1+\ldots+c_lU\gamma_l$ since $\{\beta_1,\ldots,\beta_s\} $ is a $\K$-basis of $A_k$ and 
  $\{\gamma_1,\ldots,\gamma_l\} $ is a $\K$-basis of $A_{k-1}$.
 \end{enumerate}

\end{proof}

\begin{prop}\label{prop:inductiveconstruction} With the same notation, if $A$ has the SLP, then $\tilde{A}$ has the SLP.
\end{prop}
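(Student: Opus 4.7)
My plan is to apply the Hessian criterion (Corollary \ref{cor:hessiancriteria}). Let $L=\sum_i a_iX_i\in A_1$ be a strong Lefschetz element of $A$, so $f(a)\neq 0$ and $\hess_f^k(a)\neq 0$ for $k=1,\ldots,\lfloor d/2\rfloor$, where $a=L^\perp$. Set $\tilde L=L+U\in\tilde A_1$, so $\tilde L^\perp=(a,1)$ and $\tilde f(\tilde L^\perp)=f(a)\neq 0$. Since $\tilde A$ is Gorenstein of socle degree $d+1$, it suffices to show $\hess_{\tilde f}^k(a,1)\neq 0$ for $k=1,\ldots,\lfloor(d+1)/2\rfloor$.

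By Lemma \ref{lema:algebra}, take a basis of $\tilde A_k$ of the form $\{\alpha_1,\ldots,\alpha_{h_k}\}\cup\{U\beta_1,\ldots,U\beta_{h_{k-1}}\}$, with $\{\alpha_i\}$ and $\{\beta_j\}$ bases of $A_k$ and $A_{k-1}$. A direct computation using $\tilde f=uf$, $U(uf)=f$, $U^2\tilde f=0$, and commutativity of differential operators yields the block form
\[
\Hess_{\tilde f}^{k}(a,1)=\begin{pmatrix} H & M\\ M^T & 0\end{pmatrix},
\]
where $H=\Hess_f^{k}(a)$ and $M=\Hess_f^{(k,k-1)}(a)$.

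For $k\leq\lfloor d/2\rfloor$ (so $d-2k+1\geq 1$), the SLP hypothesis on $L$ makes $H$ invertible, hence
\[
\det\Hess_{\tilde f}^k(a,1)=(-1)^{h_{k-1}}\det H\cdot\det(M^TH^{-1}M).
\]
Let $N$ be the $h_k\times h_{k-1}$ matrix of $\mu_L:A_{k-1}\to A_k$ in the chosen bases, so $L\beta_j=\sum_\ell N_{\ell j}\alpha_\ell$. Using $g(a)=\frac{1}{r!}L^r(g)$ for $g\in R_r$ and $L^{d-2k+1}\beta_j=\sum_\ell N_{\ell j}L^{d-2k}\alpha_\ell$, one gets $M=\frac{1}{d-2k+1}HN$. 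Next, $(M^TN)_{jj'}=\sum_i N_{ij'}\alpha_i\beta_j(f)(a)=\beta_j\beta_{j'}L(f)(a)$, and Euler's identity applied to the homogeneous polynomial $\beta_j\beta_{j'}(f)\in R_{d-2k+2}$ gives $L(\beta_j\beta_{j'}(f))(a)=(d-2k+2)\beta_j\beta_{j'}(f)(a)$, so $M^TN=(d-2k+2)\Hess_f^{k-1}(a)$. Combining,
\[
M^TH^{-1}M=\frac{d-2k+2}{d-2k+1}\Hess_f^{k-1}(a),
\]
which is invertible by SLP on $L$ (for $k\geq 2$) or reduces to the scalar $f(a)\neq 0$ (for $k=1$). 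Thus $\hess_{\tilde f}^k(a,1)\neq 0$.

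The remaining case $k=(d+1)/2$ with $d$ odd is handled directly: $H$ vanishes because $A_{2k}=0$, while $M_{ij}=\alpha_i\beta_j(f)(a)$ reduces to the Gorenstein pairing between $A_k$ and $A_{d-k}=A_{k-1}$, which is square (since $h_k=h_{k-1}$) and nonsingular, so $\det\Hess_{\tilde f}^k(a,1)=\pm(\det M)^2\neq 0$. The main obstacle is the Schur-complement identity $M^TH^{-1}M=\frac{d-2k+2}{d-2k+1}\Hess_f^{k-1}(a)$ in the non-boundary case; once the factorization $HN=(d-2k+1)M$ and the Euler-identity evaluation of $M^TN$ are in place, the proof concludes by induction on the Hessian order.
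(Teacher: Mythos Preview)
Your proof is correct and follows the same strategy as the paper: block-decompose $\Hess^k_{\tilde f}$ via Lemma~\ref{lema:algebra}, apply the Schur complement using the invertibility of $\Hess^k_f$, and identify the resulting complement with (a scalar multiple of) $\Hess^{k-1}_f$. The only differences are cosmetic---the paper obtains this last identification by interpreting the product $\Hess^{(k-1,k)}_f(\Hess^k_f)^{-1}\Hess^{(k,k-1)}_f$ as the matrix of the composite map $\mu_{L^{d-2k+2}}:A_{k-1}\to A_{d-k+1}$ via Theorem~\ref{thm:generalization}, whereas you compute it directly from Euler's relation $L(g)(a)=(\deg g)\,g(a)$---and you also treat the boundary case $k=(d+1)/2$ (for $d$ odd) explicitly, which the paper's proof leaves implicit.
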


\begin{proof}
 By Lemma \ref{lema:ideal} and Lemma \ref{lema:algebra}, we get:
 
 $$ \Hess^k_{\tilde{f}} = \left[\begin{array}{cc}
                               0  & \Hess^{(k-1,k)}_f\\
                               \Hess^{(k,k-1)}_f & u \Hess^k_f
                              \end{array}\right] $$
By hypothesis and by Corollary \ref{cor:hessiancriteria}, $\hess_f^k \neq 0$, hence one can apply the determinant of block matrix to get:
$$ \hess^k_{\tilde{f}} = u^s\hess_f^k\det[0-\Hess^{(k-1,k)}_f(u \Hess^k_f)^{-1}\Hess^{(k,k-1)}_f]$$
Multiplying by $u$ we get:
$$ \hess^k_{\tilde{f}} = \hess_f^k\det[-\Hess^{(k-1,k)}_f(\Hess^k_f)^{-1}\Hess^{(k,k-1)}_f]$$
By Theorem \ref{thm:generalization} we can interpret the multiplication $[\Hess^{(k-1,k)}_f(\Hess^k_f)^{-1}\Hess^{(k,k-1)}_f]$, up to a scalar multiple, as a composition of multiplication maps by a 
general linear form $L \in A_1$ in the following way:
$$\begin{array}{ccccccc}
   A_{k-1} & \to & A_{d-k} & \to & A_k & \to A_{d-k+1}\\
   \alpha & \mapsto & L^{d-2k+1}\alpha & \mapsto & L\alpha & \mapsto L^{d-2k+2}\alpha 
  \end{array} $$
In fact,      $\Hess^{(k,k-1)}_f(L^{\perp})$ is the matrix of the map $\mu_{L^{d-2k+1}}:A_{k-1} \to A_{d-k}$, $(\Hess^k_f)^{-1}(L^{\perp})$ is the inverse of the matrix of the map
$\mu_{L^{d-2k}} :A_k \to A_{d-k}$ and      $\Hess^{(k-1,k)}_f(L^{\perp})$ is the matrix of the map $\mu_{L^{d-2k+1}}:A_k \to A_{d-k+1}$.    \\
Notice that the composition is the map $\mu_{L^{d-2k+2}}:A_{k-1}\to A_{d-(k-1)}$ and hence, by Theorem \ref{thm:generalization}, its matrix is just $\Hess_f^{k-1}(L^{\perp})$ whose determinant is non zero by hypothesis.
                                                                                                            
                                                 \end{proof}
                                          
A codimension $n$ Boolean $\K$-algebra can be presented as the complete intersection $$\K[x_1,\ldots,x_n]/(x_1^2, \ldots, x_n^2) \simeq \K[X_1,\ldots,X_n]/\operatorname{Ann}(x_1\ldots x_n).$$ It is a particular case of 
the algebras given by the annihilator of a monomial that have been treated by Stanley \cite{St} and Watanabe \cite{HMMNWW}. This result motivated the entire area and has been reproved by using different methods in \cite{RRR, Ik, HMMNWW}.
As a consequence of Proposition \ref{prop:inductiveconstruction} we give a simple proof that Boolean algebras have the SLP using Mixed Hessians. 

\begin{cor} \label{cor:booleanalgebras}
Let $\K$ be a field of characteristic zero. Then, the complete intersection algebra $\K[x_1,\ldots,x_n]/(x_1^2, \ldots, x_n^2)$ has the SLP. 
\end{cor}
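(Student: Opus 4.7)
The plan is to argue by induction on the number of variables $n$, using Proposition \ref{prop:inductiveconstruction} as the inductive step. Let $B_n = \K[x_1,\ldots,x_n]/(x_1^2,\ldots,x_n^2)$ denote the Boolean algebra of codimension $n$. As noted in the statement, its Macaulay dual generator is the squarefree monomial $f_n = x_1 x_2 \cdots x_n$.

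For the base case $n=1$, the algebra $\K[x]/(x^2)$ has socle degree $1$ and Hilbert vector $(1,1)$, so any linear form $L = ax$ with $a\neq 0$ induces an isomorphism $\mu_{L}\colon A_0 \to A_1$; trivially SLP holds.

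For the inductive step, assume that $B_{n-1}$ satisfies SLP. The Macaulay dual of $B_{n-1}$ is $f_{n-1} = x_1 \cdots x_{n-1} \in \K[x_1,\ldots,x_{n-1}]$. Setting $u = x_n$ and $\tilde{R} = \K[x_1,\ldots,x_{n-1},u]$, we have $\tilde{f} := u\, f_{n-1} = x_1 \cdots x_n = f_n$, which is exactly the Macaulay dual of $B_n$. Hence $B_n = \tilde{Q}/\ann_{\tilde{Q}}(\tilde{f})$ fits the hypothesis of Proposition \ref{prop:inductiveconstruction}, and we conclude that $B_n$ has the SLP.

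There is essentially no technical obstacle: the only point to check is that the construction $(f, u) \mapsto uf$ in Proposition \ref{prop:inductiveconstruction} applies with $u$ a genuinely new variable, which is automatic since $f_{n-1} \in \K[x_1,\ldots,x_{n-1}]$ and $u = x_n$ is independent of $x_1,\ldots,x_{n-1}$. The identification of the Macaulay dual of $B_n$ with the monomial $x_1\cdots x_n$ is the input recalled in the statement of the corollary, so the induction closes immediately.
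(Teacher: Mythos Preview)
Your proposal is correct and follows essentially the same argument as the paper: induction on $n$ with trivial base case $n=1$, and Proposition \ref{prop:inductiveconstruction} applied to $\tilde{f}=u\,f_{n-1}=x_1\cdots x_n$ for the inductive step. The paper phrases the inductive hypothesis in terms of the nonvanishing of the higher Hessians $\hess^k_f$, but this is equivalent to SLP via Corollary \ref{cor:hessiancriteria}, so there is no substantive difference.
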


\begin{proof}
 By induction in $n = \codim (A)$, the result is trivial for $n=1$. Suppose the result is true for a $n \geq 1$, then, for $f=x_1\ldots x_n$ all the $k$-th Hessians satisfy $\hess^k_f \neq 0$.
 Let us call $A = Q/\ann(f)$. To prove the result for $B = \K[x_1,\ldots,x_n,u]/(x_1^2, \ldots, x_n^2,u^2)$, we consider $g = uf = ux_1\ldots x_n \in \K[x_1,\ldots,x_n,u]$, 
 by Proposition \ref{prop:inductiveconstruction} the result follows.
 
\end{proof}


\section{A combinatorial construction}

\begin{defin}\rm
 Let $V = \{u_1,\ldots,u_m\}$ be a finite set. A simplicial complex $\Delta$ with vertex set $V$ is a subset of the power set $ 2^V$, such that for all $A \in \Delta$ and for 
 all $B \subseteq A$ we have $B \in \Delta$. The members of $\Delta$ are referred as faces and maximal faces (with respect to the inclusion) are called facets. 
 If $A \in \Delta$ and $|A|=k$, it is called a $(k-1)$-face, or a face of dimension $k-1$. 
 If all the facets have the same dimension $d$ the complex is said to be homogeneous of (pure)
 dimension $d$. We say that $\Delta$ is a simplex if  $\Delta = 2^{V}$.
\end{defin}

In our context we identify the faces of a simplicial complex with monomials in the variables $\{u_1,\ldots,u_m\}$. Let $\K$ be any field and let $R = \K[u_1,\ldots,u_m]$ be the polynomial ring. To any finite subset $F \subset \{u_1,\ldots,u_m\}$ 
we associate the monomial $m_F = \displaystyle \prod_{u_i \in F}u_i $. In this way there is a natural bijection between the simplicial complex $\Delta$ and the set of the monomials $m_F,$ where $F$ a facet of $\Delta$.

Let $\Delta$ be a homogeneous simplicial complex of dimension $d-2$ whose facets are given by the monomials $g_i \in \K[u_1,\ldots,u_m]_{d-1}$. Let $f  \in \K[x_1,\ldots,x_n,u_1,\ldots,u_m]_{(1,d-1)}$ be the bihomogeneous form of monomial square free type
associated to $\Delta$, that is $f=f_{\Delta}=\displaystyle \sum_{i=1}^nx_ig_i$ (see Definition \ref{defin:bigraded1}).  The vertex set of $\Delta$ is also called $0$-skeleton and and we write $V=\{u_1,\ldots,u_m\}$. We identify the $1$-skeleton with 
a simple graph $\Delta_1=(V,E)$, hence the $1$-faces are called edges. Since, by differentiation, $X_i(f)=g_i$, we can identify each facet $g_i$ with the differential operator $X_i$. We denote by $e_k$ the number of $(k-1)$-faces, hence $e_1=m$ and $e_{d-1}=n$ and we put 
$e_0:=1$ and $e_j:=0$ for $j \geq d-1$. Let $A=Q/\ann_(f_{\Delta})$ be the associated algebra, we suppose that $I_1=0$. 

\begin{defin}\rm
 Let $\Delta$ be a homogeneous simplicial complex of dimension $d-2$. We will call $A_{\Delta} = Q/\ann(f_{\Delta})$ the associated algebra to $\Delta$.
\end{defin}

 \begin{defin}\rm
  Let $\Delta$ be a homogeneous simplicial complex of dimension $d-2$. We say that $\Delta $ is facet connected if for any pair of facets $F,F'$ of $\Delta$ there exists a sequence of facets, 
  $F_0=F,F_1,\ldots,F_s=F'$ such that $F_i \cap F_{i+1}$ is a $(d-3)$-face. We say that $\Delta$ is a flag complex if every collection of pairwise adjacent vertices spans a simplex.
   \end{defin}
 
\begin{thm}\cite{GZ}  \label{thm:mainpresentedbyquadrics}
Let $\Delta$ be a homogeneous simplicial complex of dimension $d-2\geq 1$ and let $A_{\Delta}$ be the associated Artinian Gorenstein algebra. $A$ is presented by quadrics if and only if $\Delta$ is a facet connected flag complex.
\end{thm}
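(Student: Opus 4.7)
The plan is to compare the ideals $I:=\ann_Q(f_\Delta)$ and $J:=(I_2)$ bidegree by bidegree in the bigraded ring $Q=\K[X_1,\dots,X_n,U_1,\dots,U_m]$. Since $f_\Delta$ has bidegree $(1,d-1)$ and $X_iX_j\in I_2$ for all $i,j$, only the bidegrees $(a,b)$ with $a\in\{0,1\}$ are nontrivial. A preliminary computation via the pairing $\alpha\mapsto\alpha(f_\Delta)$ identifies the generators of $I_2$: the monomial generators $X_iX_j$, $U_k^2$, $U_kU_l$ for $\{u_k,u_l\}$ not an edge of $\Delta_1$, and $X_iU_k$ for $u_k\notin F_i$; together with non-monomial generators of bidegree $(1,1)$ of the form $X_iU_k-X_jU_l$, one for each pair of facet-adjacent facets $F_i,F_j$ sharing the $(d-3)$-face $F_i\setminus\{u_k\}=F_j\setminus\{u_l\}$.

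For the $(\Rightarrow)$ direction I argue by contrapositive. If $\Delta$ is not flag, take a minimal non-face $S$ with $|S|\geq 3$; then $U^S\in I_{(0,|S|)}$, but $J\cap Q_{(0,|S|)}$ is the monomial ideal generated by $U_k^2$ and by $U_kU_l$ for non-edges, and none of these divides the squarefree $U^S$ since every pair in $S$ is an edge by minimality. If $\Delta$ is not facet-connected, choose $F_i,F_j$ in distinct facet-connectedness components. A direct evaluation gives $(X_iU^{F_i})(f_\Delta)=1=(X_jU^{F_j})(f_\Delta)$, so $\alpha:=X_iU^{F_i}-X_jU^{F_j}\in I_{(1,d-1)}$. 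Expanding any candidate representation of $\alpha$ in $J$ and tracking which contributions survive in $B:=Q/J$, one sees that the monomial generators kill terms containing a forbidden factor, while the non-monomial generators $X_pU_k-X_qU_l$ (multiplied by $U^{F_p\cap F_q}$) yield exactly the identifications $X_pU^{F_p}\equiv X_qU^{F_q}\pmod J$ for facet-adjacent $F_p,F_q$. Iterating, $X_pU^{F_p}\equiv X_qU^{F_q}$ in $B$ only when $F_p,F_q$ lie in the same component; hence $\alpha\notin J$.

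For the $(\Leftarrow)$ direction, assume $\Delta$ is flag and facet-connected; I prove $I\subseteq J$ bidegree by bidegree. In bidegree $(0,k)$, $I$ is the monomial ideal of non-squarefree monomials (absorbed by $U_k^2$) together with the squarefree non-faces; flag supplies a non-edge pair inside every squarefree non-face, placing it in $(U_kU_l)\subseteq J$. In bidegree $(1,k-1)$, given $\alpha\in I$, I use the monomial generators of $J$ to reduce $\alpha$ modulo $J$ to a combination $\sum c_{i,S}X_iU^S$ in which each $S$ is squarefree with $\operatorname{supp}(S)\subseteq F_i$. Setting $T:=F_i\setminus S$, the identity $\alpha(f_\Delta)=0$ translates, by examining the coefficient of each squarefree monomial $u^T$ of degree $d-k$, into the equations $\sum_{i:F_i\supseteq T}c_{i,F_i\setminus T}=0$ for every face $T$ of size $d-k$. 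The non-monomial quadratic generators give the congruence $X_iU^{F_i\setminus T}\equiv X_jU^{F_j\setminus T}\pmod J$ whenever $F_i,F_j$ are facet-adjacent and $T\subseteq F_i\cap F_j$; combined with the per-$T$ vanishing, the $T$-block of $\alpha$ vanishes in $B$ provided that, for every face $T$, all facets containing $T$ are connected through such $T$-preserving adjacencies---equivalently, the link of $T$ in $\Delta$ is facet-connected.

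The hard part is this last combinatorial claim. Using that the link of a flag complex is flag, and that the link of $T\cup\{v\}$ equals the link of $v$ inside the link of $T$, induction on $|T|$ reduces the task to proving that in a facet-connected flag complex the link of any vertex $v$ is facet-connected. For $F,F'\ni v$ joined by a $\Delta$-chain $F=G_0,\dots,G_l=F'$, consider any transition $G_{j-1}\ni v$, $G_j\not\ni v$, and let $w$ be the unique vertex in $G_j\setminus G_{j-1}$. When $\{v,w\}$ is an edge, the flag hypothesis makes $H:=(G_{j-1}\cap G_j\setminus\{z\})\cup\{v,w\}$ a facet for every $z\in G_{j-1}\cap G_j$; $H$ contains $v$ and is adjacent to both $G_{j-1}$ and $G_j$, permitting a rerouting of the chain through $\operatorname{star}(v)$. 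When $\{v,w\}$ is a non-edge, a shortest-chain minimality argument exploiting the $d-2$ common neighbors of $v$ and $w$ in $G_{j-1}\cap G_j$ shortens the detour further. Iterating removes every exit from $\operatorname{star}(v)$ and produces a chain inside $\operatorname{star}(v)$ joining $F$ to $F'$, completing the sufficiency direction.
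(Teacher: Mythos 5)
The paper offers no proof of this statement --- it is imported verbatim from \cite{GZ} --- so there is nothing internal to compare your argument against, and I can only assess it on its own terms. Your computation of $I_2$, your treatment of the bidegrees $(0,k)$ via flagness, and both halves of the necessity direction are correct, as is your reduction of sufficiency to the combinatorial claim that for every face $T$ with $|T|\le d-3$ the facets containing $T$ are joined by chains of adjacent facets all containing $T$ (equivalently, the link of $T$ is facet-connected). The gap is precisely the lemma you flag as ``the hard part'': facet-connectivity of $\Delta$ together with flagness does \emph{not} imply facet-connectivity of vertex links. A local symptom: in your ``edge'' case, if $\{v,w\}$ were an edge then $G_{j-1}\cup G_j$ would be a clique on $d$ vertices, hence by flagness a face of dimension $d-1>\dim\Delta$; so that case is vacuous and the facet $H$ you build cannot exist. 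In a flag complex of the correct dimension one always lands in your ``non-edge'' case, and the shortest-chain argument sketched there cannot be completed, because the statement is false. For $d=4$, take the clique complex $\Delta$ of the graph on $\{v,a_1,a_2,b_1,b_2,c_1,c_2\}$ with edges $va_1,va_2,vb_1,vb_2$, $a_1a_2$, $b_1b_2$, and the strip $a_1c_1,\ a_1c_2,\ a_2c_2,\ c_1c_2,\ c_2b_2,\ c_1b_2,\ c_1b_1$. Its maximal cliques are the six triangles $va_1a_2$, $a_1a_2c_2$, $a_1c_1c_2$, $c_1c_2b_2$, $c_1b_1b_2$, $vb_1b_2$, which form a path in the facet-adjacency graph; so $\Delta$ is pure, flag and facet-connected, yet $\operatorname{link}(v)$ is the disjoint union of the edges $a_1a_2$ and $b_1b_2$.

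Your own (correct) reduction then shows that this $\Delta$ defeats the sufficiency direction. Writing $X_1,X_2$ for the operators dual to the facets $va_1a_2,vb_1b_2$ and $A_i,B_i$ for $\partial/\partial a_i,\partial/\partial b_i$, the element $\alpha=X_1A_1A_2-X_2B_1B_2$ kills $f_\Delta$ (both terms send $f_\Delta$ to $v$), but no element of $\ann_Q(f_\Delta)_2$ involves any of the monomials $A_1A_2$, $X_1A_1$, $X_1A_2$: each of these is the unique degree-two monomial whose contraction against $f_\Delta$ produces, respectively, the monomial $x_1v$, $va_2$, $va_1$, so its coefficient in any quadric of the annihilator must vanish. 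Hence the monomial $X_1A_1A_2$ occurs in no element of $\bigl(\ann_Q(f_\Delta)_2\bigr)_3$ and $\alpha\notin(I_2)$. The upshot is that the sufficiency direction, as you have set it up, genuinely requires the strictly stronger hypothesis that the link of \emph{every} face of cardinality at most $d-3$ be facet-connected; this holds for the Turan complexes used later in the paper and is vacuous when $d=3$ (where your proof is complete), but it is not a consequence of facet-connectivity and flagness of $\Delta$ alone, so either this missing hypothesis must be added to the statement or an entirely different mechanism must be found --- your argument cannot be patched as written.
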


\subsection{Turan algebras}

\begin{defin}\rm  Let $2 \leq a_1 \leq \ldots \leq a_{d-1}$ be integers. The Turan complex of order $a_1,\ldots,a_{d-1}$, $\Delta=\mathcal{TK}(a_1,\ldots,a_{d-1})$, is the homogeneous simplicial complex whose facet set is the 
Cartesian product $\pi = \displaystyle \prod_{i=1}^{d-1}\{1,2,\ldots,a_i\}$. The associated algebra is called the Turan algebra of order $(a_1,\ldots,a_{d-1})$ and denoted by $TA(a_1,\ldots,a_{d-1})$.
\end{defin}

\begin{thm} \cite{GZ} \label{thm:turanispresentedbyquadrics}
 Every Turan algebra $TA(a_1,\ldots,a_{d-1})$ is presented by quadrics. Its Hilbert vector is given by $h_k = s_{k-1}+s_{d-k-1}$ where $s_k = s_k(a_1,\ldots,a_{d-1})$ is the elementary symmetric polynomial of order $k$. 
\end{thm}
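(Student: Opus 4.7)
The plan is to verify the two assertions separately. For the first, that $TA(a_1,\ldots,a_{d-1})$ is presented by quadrics, I would apply Theorem \ref{thm:mainpresentedbyquadrics} by checking that $\mathcal{TK}(a_1,\ldots,a_{d-1})$ is a facet-connected flag complex. The facets are parameterized by the Cartesian product $\prod_{t=1}^{d-1}\{1,\ldots,a_t\}$; moving between two tuples by changing one coordinate at a time produces a sequence of facets in which consecutive ones share $d-2$ vertices, hence a $(d-3)$-face, so the complex is facet-connected. Two vertices in distinct color classes always sit in a common facet, while two vertices in the same class never do; therefore any set of pairwise adjacent vertices picks at most one vertex per class and can be completed to a facet by taking arbitrary vertices in the remaining classes, verifying the flag condition.

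For the Hilbert function, I would exploit the bigrading on $A$ coming from the bidegree $(1,d-1)$ of $f_{\Delta}$ and decompose $A_k = A_{(0,k)} \oplus A_{(1,k-1)}$. The heart of the argument is a general lemma for algebras $A_\Delta$ of this kind: $\dim_{\K} A_{(0,k)}$ equals the number of $(k-1)$-faces of $\Delta$, with a basis given by the square-free monomials $U^T$ for $T$ ranging over those faces. Indeed, (i) any $p \in Q_{(0,k)}$ involving some $U_j^2$ annihilates every square-free $g_i$; (ii) if $T$ is not a face of $\Delta$, then $U^T(g_i)=0$ for every facet $F_i$; and (iii) for $T$ a face, a vanishing relation $\sum_T c_T U^T(f) = \sum_i x_i\bigl(\sum_{T \subset F_i} c_T\, g_i/u^T\bigr)=0$ forces all $c_T=0$, because within each facet $F_i$ the monomials $g_i/u^T$ (for $T\subset F_i$) are distinct square-free monomials in the variables $u$ and hence linearly independent, and the $x_i$ themselves are independent.

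Specializing to the Turan complex, a $(k-1)$-face is the choice of $k$ of the $d-1$ color classes together with one vertex from each selected class, so the face count is $e_k = s_k(a_1,\ldots,a_{d-1})$. The bigraded Gorenstein duality $A_{(i,j)}^* \simeq A_{(d_1-i,d_2-j)}$ recalled in Section 1 then gives $\dim A_{(1,k-1)} = \dim A_{(0,d-k)} = s_{d-k}(a_1,\ldots,a_{d-1})$, and summing the two contributions yields the stated Hilbert-function formula. I expect the principal technical hurdle to be step (iii) above — the independence argument for the $U^T$'s — after which the rest is pure combinatorial counting combined with duality.
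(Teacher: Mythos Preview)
The paper does not actually supply a proof of this theorem: it is quoted from \cite{GZ} without argument, so there is no in-paper proof to compare against. Your plan is sound and is the natural one given the tools this paper provides. The facet-connected flag verification via Theorem~\ref{thm:mainpresentedbyquadrics} is exactly what one should do, and for the Hilbert function your key lemma $\dim_{\K} A_{(0,k)}=e_k$ is precisely what the paper itself invokes (without proof) in the argument for Lemma~\ref{lema:tecnicalhessian}, while the duality $\dim A_{(1,k-1)}=\dim A_{(0,d-k)}$ is recorded just before Definition~\ref{defin:bigraded1}. So your approach matches the paper's internal logic.

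One small remark: the formula your argument actually produces is $h_k=s_k+s_{d-k}$, not $h_k=s_{k-1}+s_{d-k-1}$ as printed in the statement. A sanity check on $h_1$, which must equal the codimension $m+n=s_1(a_1,\ldots,a_{d-1})+s_{d-1}(a_1,\ldots,a_{d-1})$, confirms that your version is the correct one; the indices in the displayed formula appear to be shifted by one. This is a typo in the statement, not a gap in your reasoning, so when you write that ``summing the two contributions yields the stated Hilbert-function formula'' you should instead record $h_k=s_k+s_{d-k}$ explicitly.
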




\begin{lema}\label{lema:tecnicalhessian}
 Let $\Delta$ be a simplicial complex of pure dimension $d-2$ and let $A_{\Delta}=Q/\ann{f_{\Delta}}$ be the associated 
 algebra. Then the map $\mu_{L}: A_{k-1} \to A_k$, for $k \leq \frac{d}{2}$, is injective for a general $L \in A_1$ if, 
 and only if $\rk \Hess_f^{((1,k-2),(0,d-k))}= e_{d-k+1}$ and $\rk \Hess_f^{((0,k-1),(1,d-k-1))}=e_{k-1}$.
\end{lema}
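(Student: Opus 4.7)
The strategy is to exploit the bigraded structure forced by $f=f_\Delta\in R_{(1,d-1)}$. Since $f$ has $x$-degree one, every product $X_iX_j$ annihilates $f$, so $A_{(r,\,\cdot\,)}=0$ for $r\ge 2$ and hence
$$A_{k-1}=A_{(0,k-1)}\oplus A_{(1,k-2)},\qquad A_k=A_{(0,k)}\oplus A_{(1,k-1)}.$$
Writing $L=L_0+L_1$ with $L_0\in A_{(1,0)}$ and $L_1\in A_{(0,1)}$, and using $L_0\cdot A_{(1,\,\cdot\,)}=0$, the multiplication $\mu_L$ is block triangular:
$$\mu_L=\begin{pmatrix}\mu_{L_1}^{(0)} & 0\\[2pt]\mu_{L_0} & \mu_{L_1}^{(1)}\end{pmatrix},$$
with $\mu_{L_1}^{(0)}\colon A_{(0,k-1)}\to A_{(0,k)}$, $\mu_{L_1}^{(1)}\colon A_{(1,k-2)}\to A_{(1,k-1)}$, and $\mu_{L_0}\colon A_{(0,k-1)}\to A_{(1,k-1)}$.

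Next I would translate the two hypothesized rank conditions. By the bigraded refinement of Theorem~\ref{thm:generalization}, combined with the Gorenstein identifications $A_{(0,d-k)}^{*}\simeq A_{(1,k-1)}$ and $A_{(1,d-k-1)}^{*}\simeq A_{(0,k)}$ forced by the socle bidegree $(1,d-1)$, the matrix $\Hess_f^{((1,k-2),(0,d-k))}(L^{\perp})$ (whose entries live in $R_{(0,1)}$) is a nonzero scalar multiple of the matrix of $\mu_{L_1}^{(1)}$; hence its generic rank equals $e_{d-k+1}=\dim A_{(1,k-2)}$ iff $\mu_{L_1}^{(1)}$ is injective for general $L_1$. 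Symmetrically $\Hess_f^{((0,k-1),(1,d-k-1))}$ represents $\mu_{L_1}^{(0)}$, and its generic rank equals $e_{k-1}=\dim A_{(0,k-1)}$ iff $\mu_{L_1}^{(0)}$ is injective for general $L_1$.

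The sufficient direction is then immediate from the block form: if both $\mu_{L_1}^{(0)}$ and $\mu_{L_1}^{(1)}$ are injective for general $L_1$, any $(\alpha,\beta)\in\ker\mu_L$ satisfies $\mu_{L_1}^{(0)}(\alpha)=0$ (top row), so $\alpha=0$, and then $\mu_{L_1}^{(1)}(\beta)=0$ (bottom row), so $\beta=0$. For the necessary direction, condition (1) follows at once from the ``$(0,\beta)$-argument'': any nonzero $\beta\in\ker\mu_{L_1}^{(1)}$ produces $(0,\beta)\in\ker\mu_L$ independently of $L_0$. The subtle point is the necessity of (2), since at first glance the off-diagonal block $\mu_{L_0}$ could compensate for a defect of $\mu_{L_1}^{(0)}$. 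To rule this out I would dualize via the Gorenstein pairing: $\mu_L\colon A_{k-1}\to A_k$ is injective iff $\mu_L\colon A_{d-k}\to A_{d-k+1}$ is surjective, and the dual map has an analogous block-triangular structure with its zero block in the upper-right (again because $X_iX_j=0$). Applying the analogous ``isolated bigraded summand'' argument to the cokernel of the dual map---detecting the surjectivity deficit in the summand not reached by the zero block, after using (1) to cut $\ker A''$ down to the expected dimension---forces $\mu_{L_1}\colon A_{(1,d-k-1)}\to A_{(1,d-k)}$ to be surjective, which via Gorenstein duality is precisely condition (2).

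The main obstacle is thus the necessity of (2): the naive fear is that the off-diagonal block $\mu_{L_0}$ saves injectivity even if $\mu_{L_1}^{(0)}$ fails to be injective, but the Gorenstein-dual reformulation, together with the placement of the zero block in the dual matrix, confines the possible compensation to the ``wrong'' bigraded summand, forcing (2) to hold.
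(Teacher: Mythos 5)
Your setup coincides with the paper's: the paper decomposes the rectangular Hessian $\Hess_f^{(k-1,d-k)}$ into four blocks with a zero block coming from $X_iX_j\in\ann_Q(f)$, while you decompose the map $\mu_L$ itself; via Theorem~\ref{thm:generalization} these are the same data, and your translation of the two rank conditions into injectivity of $\mu_{L_1}^{(0)}\colon A_{(0,k-1)}\to A_{(0,k)}$ and $\mu_{L_1}^{(1)}\colon A_{(1,k-2)}\to A_{(1,k-1)}$ is correct, as are the sufficiency argument and the necessity of condition~(1).

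The gap is precisely at the step you flag as subtle, and your dualization does not close it. Write the Gorenstein adjoint $\mu_L\colon A_{d-k}\to A_{d-k+1}$ in block form: since $L_0$ kills $A_{(1,\cdot)}$, the zero block now sits in the position $A_{(1,d-k-1)}\to A_{(0,d-k+1)}$, so the target summand reached by only one block is $A_{(0,d-k+1)}$, and surjectivity of $\mu_L$ forces surjectivity only of $\mu_{L_1}\colon A_{(0,d-k)}\to A_{(0,d-k+1)}$. The Gorenstein adjoint of that map is $\mu_{L_1}\colon A_{(1,k-2)}\to A_{(1,k-1)}$, so this argument reproduces condition~(1) a second time, not condition~(2). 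The map you actually need to show surjective, $\mu_{L_1}\colon A_{(1,d-k-1)}\to A_{(1,d-k)}$ (the adjoint of $\mu_{L_1}^{(0)}$), has target $A_{(1,d-k)}$, which is also hit by the off-diagonal block $\mu_{L_0}\bigl(A_{(0,d-k)}\bigr)$; surjectivity of $\mu_L$ therefore only yields $\rk\bigl(\mu_{L_1}|_{A_{(1,d-k-1)}}\bigr)\ge e_{k-1}-(e_{d-k}-e_{d-k+1})$, which is weaker than the required $e_{k-1}$ unless $e_{d-k}=e_{d-k+1}$. In short, dualizing swaps injectivity for surjectivity and simultaneously moves the zero block to the opposite corner, and the two effects cancel, so the feared compensation by $\mu_{L_0}$ is not ruled out; the phrase ``after using (1) to cut $\ker A''$ down to the expected dimension'' does not repair this, since even with $\dim\ker(\mu_{L_1}|_{A_{(0,d-k)}})=e_{d-k}-e_{d-k+1}$ the image of that kernel under $\mu_{L_0}$ can fill the deficit of $\mu_{L_1}|_{A_{(1,d-k-1)}}$. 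For what it is worth, the paper's own proof dispatches exactly this implication with the single phrase ``by the shape of the matrix''; you have correctly located the one point of the lemma that needs a genuine argument, but an argument using more than the block shape --- for instance the fact that the off-diagonal Hessian block $\Hess_f^{((0,k-1),(0,d-k))}$ has entries in $R_{(1,0)}$ while the two diagonal blocks have entries in $R_{(0,1)}$ --- is still missing.
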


\begin{proof} 
Since $A_k=A_{(1,k-1)}\oplus A_{(0,k)}$, and, since by Theorem \ref{thm:mainpresentedbyquadrics}, 
$\dim A_{(0,k)}=e_k$ and $$\dim A_{(1,k-1)}=\dim A_{(0,d-k)} = e_{d-k},$$
with a choice of bases 
consistent with the decomposition as direct sum, we have: 
$$\Hess_f^{(k-1,d-k)} = \left[ \begin{array}{cc} 
                                        0 & \Hess_f^{((1,k-2),(0,d-k))}\\
                                        \Hess_f^{((0,k-1),(1,d-k-1))} & \Hess_f^{((0,k-1),(0,d-k))}
                                       \end{array}
\right]_{(e_{d-k+1}+e_{k-1})\times (e_k+e_{d-k})},$$
where the matrices $\Hess_f^{((1,k-2),(0,d-k))}$ and $ \Hess_f^{((0,k-1),(1,d-k-1))}$ have order $e_{d-k+1}\times e_{d-k}$ and $e_{k-1}\times e_k$ respectively.\par

The injectivity of $\mu_L :A_{k-1}\to A_k$ implies $e_{d-k+1}+e_{k-1} \leq e_{d-k} +e_k$ and $\rk \Hess_f^{(k-1,d-k)} = e_{d-k+1}+e_{k-1}$. By the shape of the matrix 
this maximal rank can be achieved if and only if $\rk \Hess_f^{((1,k-2),(0,d-k))}= e_{d-k+1}$ and $\rk \Hess_f^{((0,k-1),(1,d-k-1))}=e_{k-1}$.\par

Conversely, if $\rk \Hess_f^{((1,k-2),(0,d-k))}= e_{d-k+1}$ and $\rk \Hess_f^{((0,k-1),(1,d-k-1))}=e_{k-1}$, then $\rk \Hess_f^{(k-1,d-k)} = e_{d-k+1}+e_{k-1}$ 
yielding the desired result.
\end{proof}

\begin{lema}\label{lema:turan222} Let $d \geq 3$ be an integer and consider the Turan complex $\mathcal{TK}(2^{(d-1)}):=\mathcal{TK}(2,\ldots,2) $ of dimension $d-1$. 
Let $f$ be the associated form. Then $$\rk \Hess_f^{((1,0),(0,d-2))}< 2^{d-1}.$$ 
\end{lema}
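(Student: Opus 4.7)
The plan is to exhibit an explicit polynomial linear dependence among the rows of the mixed Hessian $\Hess_f^{((1,0),(0,d-2))}$, which forces its rank to drop below $\dim A_{(1,0)} = 2^{d-1}$.

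First I would fix notation adapted to the product structure of $\mathcal{TK}(2^{(d-1)})$. Label the vertex pairs by $u_{i,1},u_{i,2}$ for $i=1,\ldots,d-1$ and index the $2^{d-1}$ facets by $\epsilon=(\epsilon_1,\ldots,\epsilon_{d-1})\in\{1,2\}^{d-1}$, with facet monomial $g_\epsilon=\prod_{i=1}^{d-1}u_{i,\epsilon_i}$ and $f=\sum_\epsilon x_\epsilon g_\epsilon$. I take $\{X_\epsilon\}$ as a basis of $A_{(1,0)}$, and for $A_{(0,d-2)}$ the monomial basis $\eta_{j,\epsilon'}:=\prod_{i\neq j}U_{i,\epsilon'_i}$, parametrised by $(j,\epsilon')\in\{1,\ldots,d-1\}\times\{1,2\}^{[d-1]\setminus\{j\}}$ (consistent with $e_{d-2}=(d-1)\,2^{d-2}$). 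A direct computation then gives the entries
$$X_\epsilon\,\eta_{j,\epsilon'}(f)=\eta_{j,\epsilon'}(g_\epsilon)=\begin{cases} u_{j,\epsilon_j} & \text{if }\epsilon|_{[d-1]\setminus\{j\}}=\epsilon',\\ 0 & \text{otherwise,}\end{cases}$$
so each row $X_\epsilon$ has exactly $d-1$ nonzero entries, placed at columns $(j,\epsilon|_{[d-1]\setminus\{j\}})$ with value $u_{j,\epsilon_j}$.

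The main (and essentially only nontrivial) step is guessing the right coefficients. I would propose
$$c_\epsilon:=(-1)^{\epsilon_1+\cdots+\epsilon_{d-1}}\prod_{i=1}^{d-1}u_{i,3-\epsilon_i}\in R_{d-1}$$
and verify that $\sum_\epsilon c_\epsilon\cdot(\text{row }\epsilon)=0$. In any column $(j,\epsilon')$ only the two rows indexed by $\epsilon'(j\mapsto 1)$ and $\epsilon'(j\mapsto 2)$ contribute, and after factoring out the common monomial $u_{j,1}u_{j,2}\prod_{i\neq j}u_{i,3-\epsilon'_i}$ the sum collapses to $(-1)^{|\epsilon'|+1}+(-1)^{|\epsilon'|+2}=0$. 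Since every $c_\epsilon$ is a nonzero monomial in $R$, this is a nontrivial $R$-linear (hence fraction-field linear) relation among the rows, so the generic rank of $\Hess_f^{((1,0),(0,d-2))}$ is at most $2^{d-1}-1$, proving the claim.

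I do not foresee any substantial obstacle. The only creative step is discovering the alternating antipodal coefficients $c_\epsilon$, which is suggested by the $(\Z/2)^{d-1}$-symmetry of $\mathcal{TK}(2,\ldots,2)$ under swapping $u_{i,1}\leftrightarrow u_{i,2}$ in each coordinate, and can be confirmed by inspecting the $4\times 4$ case $d=3$ by hand before extrapolating.
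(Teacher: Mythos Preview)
Your proof is correct and follows essentially the same approach as the paper: both exhibit a nontrivial $R$-linear relation among the rows by multiplying the row indexed by $\epsilon$ by the sign $(-1)^{\sum\epsilon_i}$ times the product of the ``complementary'' variables $\prod_i u_{i,3-\epsilon_i}$, and then observe that in each column the two surviving contributions cancel. Your write-up is in fact more explicit than the paper's, which phrases the same multipliers as ``$(-1)^{j_1+\cdots+j_{d-1}}$ times all the variables that do not figure in the row'' and leaves the column-by-column cancellation to the reader.
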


\begin{proof} Let us write $\Delta=\mathcal{TK}(2^{(d-1)})$. 
 First of all note that the rows of $\Hess_f^{((1,0),(0,d-2))}$ are indexed by the $2^{d-1}$ facets $x_{\alpha}$ of $\Delta$ and the columns are indexed by the $(d-2)$-faces $F$ of $\Delta$. 
 A non zero element of $\Hess_f^{((1,0),(0,d-2))}$ is a degree one monomial representing the remaining vertex of the facet $u_{\alpha}$ that does not belongs to the $(d-2)$-face $F$. 
 For instance, every column $F$ has only two non zero elements, say $u_{ij}$ and $u_{kj}$ representing the remaining vertex of the two faces that contain $F$. Furthermore, the other non zero elements 
 of the rows $i$ and $k$ are the same. \par
 If we multiply every row indexed by $x_{\alpha}$, with $\alpha=(j_1,\ldots,j_{d-1})$ where $j_i \in \{0,1\}$ by $(-1)^{j+1+\ldots+j_{d-1}}$, and by all the variables that do not 
 figure in the row. Then we get a matrix $M$ such that every column $j$ has only two non zero elements and they are opposite, say $M_j$ and $-M_j$. Summing up the rows, the result follows.  
\end{proof}

\begin{lema}\label{lema:subcomplex}
 Let $\Delta$ be a pure simplicial complex of dimension $d-2$ with $n$ facets and let $A_{\Delta}=Q/\ann{f_{\Delta}}$ be the associated algebra. 
 Let $v \in V(\Delta)$ be a vertex and denote $\Delta'= \Delta\setminus v$ 
 be the complex obtained from $\Delta$ by deleting $v$, let $n'$ be the number of facets of $\Delta'$ and let $A_{\Delta'}=Q/\ann{f_{\Delta'}}$ the associated algebra. Then
 $$\rk \Hess_f^{((1,0),(0,d-2))} = n \Rightarrow \rk \Hess_{f'}^{((1,0),(0,d-2))} = n'.$$
\end{lema}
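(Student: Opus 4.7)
The plan is to exhibit $\Hess_{f'}^{((1,0),(0,d-2))}$ as a block inside $\Hess_f^{((1,0),(0,d-2))}$ in a way that forces the maximal rank property to descend. First I would set up the bookkeeping: order the facets of $\Delta$ so that $F_1,\ldots,F_{n'}$ are the facets of $\Delta$ not containing $v$ (which are precisely the facets of $\Delta'$) and $F_{n'+1},\ldots,F_n$ are the facets containing $v$. Do the analogous reordering for the $(d-2)$-faces of $\Delta$: put the faces $G_1,\ldots,G_{m'}$ not containing $v$ first (these are exactly the $(d-2)$-faces of $\Delta'$), and $G_{m'+1},\ldots,G_m$ containing $v$ last. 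With these orderings
$$\Hess_f^{((1,0),(0,d-2))} = \begin{bmatrix} A & B \\ C & D \end{bmatrix},$$
where the blocks are read off the partitions.

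Next I would use the combinatorial description of the entries recalled in Lemma \ref{lema:turan222}: the entry indexed by a facet $F$ and a $(d-2)$-face $G$ is the variable corresponding to the unique vertex of $F\setminus G$ when $G\subset F$, and is $0$ otherwise. From this description two things follow. First, the top-right block $B$ is zero, because if $v\in G_j$ and $v\notin F_i$, then $G_j\not\subset F_i$. Second, the top-left block $A$ coincides with $\Hess_{f'}^{((1,0),(0,d-2))}$: the pairs $(F_i,G_j)$ with $i\le n'$, $j\le m'$ involve only facets and $(d-2)$-faces of $\Delta'$, and the entry (unique vertex of $F_i\setminus G_j$) is computed inside $\Delta'$ in exactly the same way. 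In particular the entries of $A$ do not involve the variable associated to $v$.

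Finally I would conclude by a simple linear-algebra argument on the block form $\Hess_f=\bigl[\begin{smallmatrix} A & 0 \\ C & D \end{smallmatrix}\bigr]$. If a nontrivial linear combination of the rows of $A$ vanished, then padding those coefficients with zeros for the rows indexed by facets containing $v$ would give a nontrivial linear combination of the rows of $\Hess_f$ equal to zero in the first $m'$ columns (by assumption) and in the last $m-m'$ columns (because $B=0$). This would contradict $\rk \Hess_f^{((1,0),(0,d-2))}=n$. Hence the $n'$ rows of $A=\Hess_{f'}^{((1,0),(0,d-2))}$ are linearly independent, giving $\rk \Hess_{f'}^{((1,0),(0,d-2))}=n'$.

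I do not see a real obstacle here; the only point that requires a moment of care is to check that $B=0$ (rather than trying to use $C$, which is genuinely nonzero with entries equal to the variable attached to $v$), and this is exactly what the set-theoretic description of the entries is designed to give.
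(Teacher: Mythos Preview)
Your proof is correct and follows essentially the same block-decomposition argument as the paper (the paper places the zero block at bottom-left and $\Hess_{f'}$ at bottom-right, but this is only a reordering). One small imprecision worth flagging: the $(d-2)$-faces of $\Delta$ not containing $v$ need not all be $(d-2)$-faces of $\Delta'$ (such a face may lie only in facets through $v$), so your block $A$ may have a few extra columns beyond $\Hess_{f'}$; however, those extra columns are identically zero by exactly the same containment reasoning you used for $B=0$, so your row-independence argument goes through unchanged.
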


\begin{proof}
 Let us choose an ordered basis of $A_{(0,1)}$ such that the last $n'$ vectors represent  the faces containing $v$. Let us choose a basis of $A_{(0,d-2)}$ in such a way that 
 the first vectors represent $d-2$ faces that does not contain $v$ and the last vectors the faces that contain $v$. The Matrix $\Hess_f^{((1,0),(0,d-2))}$ with respect to this basis is 
 $$\Hess_f^{((1,0),(0,d-2))} = \left[\begin{array}{cc}
                                \ast & \ast \\
                                0    & \Hess_{f'}^{((1,0),(0,d-2))}
                               \end{array}\right].
$$
The zero sub-matrix occurs by our choice of ordered basis. In fact, if $X_i$ represent a face not containing $v$, then $X_i(f)$ does not contain the variable $v$ and since 
the first vector of $A_{(0,d-2)}$ contain $v$, the derivative is zero. The result easily follows.
\end{proof}

\begin{defin}\rm
 Let $\Delta$ be a simplicial complex of pure dimension. We say that a new complex $\Delta'$ is constructed from $\Delta$ attaching a leaf if we add one vertex and one facet, that is, 
 $V_{\Delta'}=V_{\Delta} \cup \{v\}$ and $F_{\Delta'}=F_{\Delta} \cup \{F\}$ with $v \in F$. 
\end{defin}

The following Lemma will be useful in the sequel.

\begin{lema}\label{lema:attachcell}
 Let $\Delta$ be a $d-2 \geq 2$ dimensional simplicial complex and let $A_{\Delta}$ the associated algebra. Suppose that $A_{\Delta}$ is presented by quadrics and $e_1 \leq e_2$ and $e_{d-1}\leq e_{d-2}$. 
 Let $\Delta'$ be the simplicial complex constructed from $\Delta$ attaching a leaf. Then the algebra $A'=A_{\Delta'}$ associated to $\Delta'$ is presented by quadrics. 
 Moreover, if there is $L \in A_1$ such that $\mu_{L}:A_1 \to A_2$ is injective, then there is $L' \in A'$ such that $\mu_{L'}:A'_1 \to A'_2$ is also injective.
\end{lema}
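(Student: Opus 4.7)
The plan is to treat the two claims of the lemma separately: claim (a) follows from the structural characterization (Theorem \ref{thm:mainpresentedbyquadrics}), while claim (b) is reduced to a block-matrix computation via the Hessian decomposition (Lemma \ref{lema:tecnicalhessian}). Throughout, write $F_0 = F \setminus \{v\}$. For the attachment to yield a flag complex, I take as implicit in the construction that $F_0$ is already a $(d-3)$-face of $\Delta$; otherwise $\Delta'$ would introduce new edges among vertices of $F_0$ and possibly fail flagness.

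For claim (a), facet-connectedness is immediate because $F$ shares the $(d-3)$-face $F_0$ with any facet of $\Delta$ containing $F_0$. For flagness, consider a clique $W$ in the $1$-skeleton of $\Delta'$. If $v \in W$, then since $v$ is adjacent only to vertices of $F_0$ we have $W \subseteq F$, which is a face of $\Delta'$. If $v \notin W$, then $W \subseteq V(\Delta)$, and no new edges among old vertices were introduced (as $F_0 \in \Delta$), so $W$ is a clique of $\Delta_1$ and hence a face of $\Delta$ by flagness of $\Delta$. Theorem \ref{thm:mainpresentedbyquadrics} then gives that $A'$ is presented by quadrics.

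For claim (b), apply Lemma \ref{lema:tecnicalhessian} with $k=2$: the injectivity of $\mu_{L'}:A'_1\to A'_2$ is equivalent to the two matrices $\Hess_{f'}^{((1,0),(0,d-2))}$ and $\Hess_{f'}^{((0,1),(1,d-3))}$ attaining maximal ranks $n+1$ and $m+1$ respectively at $L'^{\perp}$. Writing $f' = f + x_F\, v\, m_{F_0}$, I will exhibit a block-triangular form for each, with the corresponding old Hessian of $f$ as the top-left block. Ordering the rows of the first matrix so that the new facet $X_F$ is last, and the columns so that the $d-2$ new $(d-3)$-faces $\{v\}\cup(F_0\setminus\{u_{t}\})$ are last, a direct computation gives
\[
\Hess_{f'}^{((1,0),(0,d-2))} =
\begin{pmatrix}
\Hess_{f}^{((1,0),(0,d-2))} & 0 \\
R_F & [u_{t_1},\ldots,u_{t_{d-2}}]
\end{pmatrix};
\]
the top-right vanishes because $U_v$ annihilates $f$ while $X_i$ (for $i\neq F$) annihilates $x_F g_F$, and the bottom-right entries are the vertices of $F_0$ arising from derivatives of $g_F = v\, m_{F_0}$. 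Choosing a basis of $A'_{(1,d-3)}$ representing each old edge $e$ via $X_{F_e}U_{F_e\setminus e}$ for some facet $F_e\in\Delta$ (so that the $U_v$-row annihilates them) and each new edge $\{v,u_i\}$ via $X_F U_{F_0\setminus\{u_i\}}$, an analogous computation produces
\[
\Hess_{f'}^{((0,1),(1,d-3))} =
\begin{pmatrix}
\Hess_{f}^{((0,1),(1,d-3))} & D \\
0 & [u_{t_1},\ldots,u_{t_{d-2}}]
\end{pmatrix}.
\]

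By the hypothesis on $\Delta$ together with Lemma \ref{lema:tecnicalhessian}, the set of $L\in A_1$ at which both Hessians of $f$ have maximal rank is Zariski open and nonempty; intersecting with the Zariski-open (and nonempty, since $|F_0|=d-2\geq 2$) condition that some coordinate of $L$ at a vertex of $F_0$ is nonzero, I may choose such an $L$. Extending to $L' = L + a_F X_F + b_v U_v$ (for any $a_F,b_v$), the block-triangular forms show that the new last row is linearly independent from the preceding ones, so both Hessians of $f'$ attain the required maximal ranks $n+1$ and $m+1$. Lemma \ref{lema:tecnicalhessian} applied to $\Delta'$ then yields the injectivity of $\mu_{L'}:A'_1\to A'_2$. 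The main obstacle is the bookkeeping for the second Hessian: since $A'_{(1,d-3)}$ admits no canonical basis, the clean decomposition demands a careful choice of basis representatives (via facets of $\Delta$ for old edges, and via $X_F$ for new edges) in order to force the bottom-left block to vanish.
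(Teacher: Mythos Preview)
Your proof is correct and follows the same route as the paper: Theorem~\ref{thm:mainpresentedbyquadrics} for the presentation by quadrics, and Lemma~\ref{lema:tecnicalhessian} to reduce the injectivity of $\mu_{L'}$ to the two rank conditions on $\Hess_{f'}^{((1,0),(0,d-2))}$ and $\Hess_{f'}^{((0,1),(1,d-3))}$. Your explicit block-triangular decompositions (and the careful choice of basis for $A'_{(1,d-3)}$) simply flesh out what the paper asserts in one line, namely that ``attaching a leaf \ldots does not alter the fact that the rank of the desired mixed Hessians is maximal.''
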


\begin{proof}
 It is easy to see that $A'$ is presented by quadrics by Theorem \ref{thm:mainpresentedbyquadrics}. By Lemma \ref{lema:tecnicalhessian}, since $e_1 \leq e_2$ and $e_{d-1} \leq e_{d-2}$, 
 we have that $A$ satisfies the injective conjecture if, and only if $\rk \Hess_f^{((1,0),(0,d-2))}=e_{d-1}$ and $\rk \Hess_f^{((0,1),(1,d-3))}=e_{1}$. 
 Since attaching a leaf we still have $e'_1 \leq e'_2$ and $e'_{d-1} \leq e'_{d-2}$  and since it does not alter the fact that the rank of the desired mixed Hessians is maximal, the result follows.
\end{proof}

\begin{thm}\label{cor:matador}
 Let $A=TA(a_1,\ldots,a_{d-1})$ be the Turan algebra of order $(a_1,\ldots,a_{d-1})$ with $d \geq 3$ and $ 2 \leq a_1\leq a_2\leq\ldots \leq a_{d-1}$. 
 Then for all $L \in A_1$ the map $\mu_L : A_{1} \to A_2$ is not injective.
 \end{thm}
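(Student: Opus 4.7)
The plan is to invoke Lemma \ref{lema:tecnicalhessian} with $k=2$, which characterizes injectivity of $\mu_L : A_1 \to A_2$ for general $L \in A_1$ by the simultaneous maximality of the two mixed Hessian ranks $\rk \Hess_f^{((1,0),(0,d-2))} = e_{d-1} = n$ and $\rk \Hess_f^{((0,1),(1,d-3))} = e_1$. Since the locus of $L$ for which $\mu_L$ is injective is Zariski open, being non-injective for some $L$ is equivalent to being non-injective for a general $L$, which is equivalent to being non-injective for every $L$. Consequently, it suffices to show that one of these two mixed Hessians does not achieve maximal rank. I will focus on the first, namely $\rk \Hess_f^{((1,0),(0,d-2))} < n$.

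To handle general parameters $(a_1, \ldots, a_{d-1})$ with all $a_i \geq 2$, I would reduce to the uniform case $a_1 = \cdots = a_{d-1} = 2$ via Lemma \ref{lema:subcomplex}. The combinatorial observation is that if $\Delta = \mathcal{TK}(a_1, \ldots, a_{d-1})$ and $v$ is a vertex belonging to the $i$-th factor $V_i$ with $a_i \geq 3$, then deleting $v$ produces exactly $\mathcal{TK}(a_1, \ldots, a_i - 1, \ldots, a_{d-1})$, i.e.\ another Turan complex. Iterating these vertex deletions, starting from $\mathcal{TK}(a_1, \ldots, a_{d-1})$ and successively decrementing each factor size down to $2$, yields a finite chain of Turan complexes terminating in $\Delta_N = \mathcal{TK}(2, \ldots, 2)$.

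Lemma \ref{lema:subcomplex} asserts that if $\rk \Hess^{((1,0),(0,d-2))}$ is maximal for $\Delta$ then it is also maximal for $\Delta \setminus v$; its contrapositive propagates the failure of maximal rank \emph{backwards} along any such chain. Thus, once we invoke Lemma \ref{lema:turan222} to obtain $\rk \Hess_{f_N}^{((1,0),(0,d-2))} < 2^{d-1}$ at the end of the chain, iterated application of the contrapositive of Lemma \ref{lema:subcomplex} yields $\rk \Hess_{f}^{((1,0),(0,d-2))} < n$ at the start, completing the proof via Lemma \ref{lema:tecnicalhessian}.

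The only potentially delicate point is verifying that vertex deletion preserves membership in the Turan class so that Lemma \ref{lema:subcomplex} may be applied at each step; this is immediate from the Cartesian-product description of facets. All the genuinely analytical content has already been packaged into Lemma \ref{lema:turan222} (the explicit linear dependence among the rows in the base case) and Lemma \ref{lema:subcomplex} (rank-preservation under deletion), so the present argument is a short bookkeeping reduction.
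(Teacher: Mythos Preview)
Your proposal is correct and follows essentially the same approach as the paper: reduce to the base case $\mathcal{TK}(2,\ldots,2)$ by iterated vertex deletion via the contrapositive of Lemma~\ref{lema:subcomplex}, invoke Lemma~\ref{lema:turan222} for that base case, and conclude via Lemma~\ref{lema:tecnicalhessian}. One small wording issue: ``being non-injective for some $L$ is equivalent to being non-injective for a general $L$'' is not literally true (take $L=0$); what you need and actually use is only that the injective locus is Zariski open, so if it is not dense it is empty, which turns ``not injective for general $L$'' into ``not injective for every $L$''.
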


\begin{proof} 
 For $a_1=\ldots=a_{d-1}=2$, the result follows by Lemma \ref{lema:turan222} and Lemma \ref{lema:tecnicalhessian}. For 
 each $a_i>2$ we can delete vertex until obtain $\mathcal{TK}(2^{d-1})$ and by Lemma \ref{lema:subcomplex} and Lemma \ref{lema:tecnicalhessian} the result follows. \par
   
 \end{proof}


\section{Algebras presented by quadrics}

The WLP works in codimension $n \leq 2$, it is an open problem in codimension $n=3$ and there are algebras not satisfying it in codimension $n\geq 4 $.
Nevertheless, examples of Artinian algebras failing WLP were sporadic and the only systematic way to produce it were making the Hilbert vector non unimodal (see \cite{BI, Bo, BoL}).
In recent times the first author, in \cite{Go}, constructed families of algebras failing WLP. We recall the following result:

\begin{thm}\cite{Go} \label{thm:wlp}
 For each pair $(N,d)\not\in\{(3,3),(3,4),(4,4), (3,6)\}$ with $N \geq 3$ and with $d \geq 3$  there exist standard graded Artinian Gorenstein 
 algebras $A = \displaystyle  \oplus_{i=0}^d A_i$ of codimension $N+1$ and socle degree $d$, with a unimodal Hilbert vector that do not satisfy the WLP.
\end{thm}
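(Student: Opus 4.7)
The plan is to exhibit, for each admissible pair $(N,d)$, an explicit standard graded Artinian Gorenstein algebra $A = Q/\ann_Q(f)$ of codimension $N+1$ and socle degree $d$ that fails the WLP but whose Hilbert vector is unimodal. The natural framework is the bigraded one: take $f$ bihomogeneous of bidegree $(d_1,d_2)$ with $d_1+d_2=d$ in two disjoint sets of variables, so that $A$ inherits a bigraduation $A_k=\bigoplus_{i+j=k}A_{(i,j)}$ and the multiplication by a general linear form splits into a block structure through the mixed Hessians, exactly as in Lemma \ref{lema:tecnicalhessian}. By Corollary \ref{cor:hessiancriteria}, showing that $\mu_L$ fails to have maximal rank in the middle is reduced to producing a bihomogeneous $f$ for which the relevant (mixed) Hessian matrix drops rank identically in the coefficients of $L$.

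First I would fix a small family of seed forms depending on two parameters producing algebras that already fail WLP at the middle. A reasonable template is $f = x_0\,g(x_1,\ldots,x_N)$ with $g$ a carefully chosen form of degree $d-1$, so that $A$ has socle bidegree $(1,d-1)$; the decomposition into bigraded pieces forces an asymmetry between $A_{(1,k-1)}$ and $A_{(0,k)}$, and hence a rank drop in $\mu_L : A_{k-1}\to A_k$ near $k=\lfloor d/2\rfloor$, detectable via the submatrix $\Hess_f^{((1,k-2),(0,d-k))}$ as in Lemma \ref{lema:tecnicalhessian}.

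Next, I would use Macaulay--Matlis duality and bigraded bookkeeping to compute the Hilbert vector explicitly; for this class the vector takes a shape that is straightforwardly unimodal whenever $g$ is chosen so that its apolar algebra is Gorenstein with symmetric Hilbert vector (for example, a generic form or a power sum). Then I would bootstrap: an inductive step $f\leadsto \tilde f=u f$ analogous to Proposition \ref{prop:inductiveconstruction} increases both the codimension and the socle degree by one, preserves unimodality (by the direct-sum decomposition of Lemma \ref{lema:algebra}, which adds a shifted copy of the old Hilbert vector), and propagates the failure of WLP because the block form
\[
\Hess^k_{\tilde f} = \begin{pmatrix} 0 & \Hess^{(k-1,k)}_f \\ \Hess^{(k,k-1)}_f & u\,\Hess^k_f \end{pmatrix}
\]
inherits any rank deficiency of the middle mixed Hessian of $f$. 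Iterating this step sweeps out all pairs $(N,d)$ above the seed.

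The main obstacle will be handling the pairs $(N,d)$ close to the four exceptional values $\{(3,3),(3,4),(4,4),(3,6)\}$, where the seed construction does not directly apply and ad hoc examples must be produced while simultaneously verifying that unimodality is preserved (which, in the Gorenstein case, amounts to checking $\dim A_{k-1}\leq \dim A_k$ for $k\leq d/2$, by Remark \ref{rmk:simplifica}). The delicate part is to ensure that the rank drop in $\mu_L$ is a genuine middle-rank obstruction visible to the mixed Hessian criterion of Corollary \ref{cor:hessiancriteria}, and not an artifact of a non-unimodal Hilbert vector; producing parameter-dependent examples and computing a nonzero minor of the appropriate Hessian to witness that the obstruction is precisely of codimension one is where most of the work concentrates.
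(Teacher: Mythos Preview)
This theorem is not proved in the present paper at all: it is quoted verbatim from \cite{Go} and stated as background (note the \texttt{\textbackslash cite\{Go\}} attached to the theorem header), with no proof or sketch given here. There is therefore no ``paper's own proof'' to compare your proposal against.

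That said, a brief comment on the proposal itself. Your outline is in the right spirit---bigraded Macaulay duals, block-structured Hessians, and an inductive step $f\mapsto uf$---and indeed \cite{Go} proceeds via higher Hessians of bigraded forms. But there is a genuine gap in your inductive step: Proposition~\ref{prop:inductiveconstruction} shows that $f\mapsto uf$ \emph{preserves} the SLP, not that it preserves the \emph{failure} of the WLP. The block formula you wrote down does not automatically propagate a middle-rank deficiency; a nonzero $\Hess_f^{(k-1,k)}$ block can compensate for a degenerate $\Hess_f^k$ and restore full rank in $\Hess_{\tilde f}^k$. This is precisely why the paper (in Lemma~\ref{prop:hessodd} and Lemma~\ref{lema:truquepar}) multiplies by \emph{two} new variables $uv$ at each step, not one: the extra zero blocks coming from $U^2,V^2\in\ann(\tilde f)$ are what force the rank drop to persist. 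Your seed-plus-single-variable induction would need a separate argument to rule out this compensation, and in general it cannot be ruled out.
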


On the other hand, for algebras presented by quadrics there was a conjecture posed in \cite{MN1, MN2}:

\begin{conjecture} {\bf (Migliore-Nagel WLP Conjecture)}\label{conj:SMNC}
 Any Artinian Gorenstein algebra presented by quadrics, over a field $\K$ of characteristic zero, has the Weak Lefschetz Property.
\end{conjecture}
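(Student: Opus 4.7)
Since Conjecture \ref{conj:SMNC} is the very statement the paper has been building up to refute, my ``proof proposal'' is really a plan to \emph{disprove} it by producing an explicit counterexample, using the machinery assembled in Section 4.

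The candidates are the Turan algebras $TA(a_1,\ldots,a_{d-1})$. By Theorem \ref{thm:turanispresentedbyquadrics} they are automatically presented by quadrics, so the task reduces to finding parameters for which WLP fails. I would first restrict to $d=3$: the Hilbert vector formula of Theorem \ref{thm:turanispresentedbyquadrics} gives
$$\Hilb\bigl(TA(a_1,a_2)\bigr)=(1,\;1+a_1+a_2,\;1+a_1+a_2,\;1),$$
so $h_1=h_2$. In this situation the maximal rank condition for $\mu_L:A_1\to A_2$ coincides with injectivity, and Theorem \ref{cor:matador} shows that no $L\in A_1$ gives an injective $\mu_L$. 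Hence WLP fails for every $TA(a_1,a_2)$ with $a_1,a_2\geq 2$; the smallest instance $TA(2,2)$ has codimension $a_1a_2+(a_1+a_2)=8$, yielding the minimal counterexample advertised in Example \ref{ex:semplicissimo}.

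The only delicate point is the Hilbert-vector equality $h_1=h_2$: this is what converts the rank-drop of Theorem \ref{cor:matador} into a genuine WLP failure, since without it the non-injectivity of $\mu_{1,L}$ would be forced by dimensions and tell us nothing about maximal rank. The main obstacle, which I would tackle after the $d=3$ case, is constructing counterexamples of arbitrary socle degree: for $d\geq 4$ the inequality $h_1\leq h_2$ need not hold for Turan algebras, so I would either restrict the parameters to ensure it does, or feed the $d=3$ counterexample through a WLP-preserving variant of the inductive construction of Proposition \ref{prop:inductiveconstruction}, using Lemma \ref{lema:ideal} to retain the presentation by quadrics and the splitting $\tilde A_k=A_k\oplus A_{k-1}U$ of Lemma \ref{lema:algebra} to locate the failing multiplication map. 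Tracking exactly which $\mu_{i,L}$ loses maximal rank in the enlarged algebra is the serious technical step, and the mixed-Hessian criterion of Corollary \ref{cor:hessiancriteria} is the natural tool for it.
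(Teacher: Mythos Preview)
Your plan is essentially the paper's own route: the counterexample $TA(2,2)$ is exactly the square graph of Example~\ref{ex:semplicissimo}, its presentation by quadrics comes from Theorem~\ref{thm:turanispresentedbyquadrics}, and the failure of WLP from Theorem~\ref{cor:matador} together with $h_1=h_2$; the extension to higher odd socle degree via $\tilde f=uvf$ is precisely Lemma~\ref{prop:hessodd} and Corollary~\ref{cor:existemimpares}. One slip: your Hilbert vector formula $(1,\,1+a_1+a_2,\,1+a_1+a_2,\,1)$ is wrong---Theorem~\ref{thm:turanispresentedbyquadrics} gives $h_1=s_0+s_2=1+a_1a_2$ plus $s_1$? no, rather $h_1=e_1+e_{d-1}=(a_1+a_2)+a_1a_2$, consistent with the codimension $8$ you compute a line later---but this is harmless, since for socle degree~$3$ the equality $h_1=h_2$ is forced by Gorenstein duality regardless of the actual value.
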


The conjecture has been disproved by us in \cite[Cor. 3.8]{GZ}. In this section we study this phenomena in more details. We are looking for minimal examples of algebras presented by quadrics failing WLP.

\subsection{Artinian Gorenstein algebras with odd socle degree}

Let $A$ be a standard graded Artinian Gorenstein algebra with socle degree three, then $A=Q/\ann_Q(f)$ with $f \in R$ a homogeneous polynomial 
of degree $3$. Corollary \ref{cor:hessiancriteria} applied to this case tells us that $A$ satisfies the WLP if and only if $\hess_f \neq 0$.\par

By a result due to Dimca-Papadima, see see \cite[Thm. 1]{DP}, if $f$ is not a reduced polynomial and $\tilde{f}$ is its radical, then $\hess_f = 0$ if and only if $\hess_{\tilde{}f}=0$. 
For quadratic polynomials not defining a cone, $\hess_{\tilde{f}} \neq 0$, so we can restrict ourselves to reduced cubic polynomials. Furthermore, if $f=f_1f_2$ and $\hess_f=0$, then 
all the components of $X = V(f) \subset \P^n$ are developable, yielding $\hess_{f_i} \equiv 0 \pmod{f_i}$, in this case $f=l_1l_2l_3$ and $X$ is an arrangement of hyperplanes passing through a $\P^{N-2}$, which is a cone as soon as 
$N\geq 2$. 
So, from now on, we can restrict ourselves to the case that $f$ is an absolutely irreducible polynomial.

Let us recall Perazzo's construction which works like an atom for the constructions of forms with vanishing Hessian not defining a cone (see the Appendix of \cite{Go}).

\begin{defin}\rm
 A Perazzo polynomial is (up to a projective transformation) a form of type:
 $$f=\displaystyle \sum_{i=1}^s x_ig_i(\underline{u})+h(\underline{u})\in \K[\underline{x},\underline{u}]$$
 with $g_i \in \K[\underline{u}]_{d-1}$ linearly independent and algebraic dependent  and $h \in \K[\underline{u}]_{d}$.
\end{defin}

\begin{thm}\cite{Pe, GRu} \label{thm:perazzo} Perazzo hypersurfaces are not cones and have vanishing Hessian. 
Suppose that $N \leq 6$, and let $X = V(f) \subset \P^N$ be an irreducible cubic hypersurface which is not a cone and such that $\hess_f=0$. 
Then, up to a projective transformation, $f$ is a Perazzo polynomial.
\end{thm}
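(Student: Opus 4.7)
The plan is to handle the two claims separately, verifying the Perazzo construction by direct computation and then invoking the Gordan--Noether classification in the range $N\le 6$.

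For the first claim, I would write $\Hess_f$ in block form relative to the partition of coordinates into the $s$ variables $x_i$ and the remaining variables $u_j$. Since $f$ is linear in the $x_i$, the top-left $s\times s$ block vanishes and
$$\Hess_f=\begin{pmatrix} 0 & B \\ B^{\mathrm T} & C\end{pmatrix},\qquad B_{ij}=\frac{\partial g_i}{\partial u_j},$$
with $C$ polynomial in $(x,u)$. Algebraic dependence of $g_1,\ldots,g_s$ yields a nonzero relation $P(g_1,\ldots,g_s)=0$; differentiating with respect to each $u_j$ gives $\sum_i(\partial P/\partial y_i)(g)\,\partial g_i/\partial u_j=0$, exhibiting a nonzero vector in the left kernel of $B$. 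Hence $\rank B<s$, the first $s$ rows of $\Hess_f$ are $\K(u)$-linearly dependent, and $\hess_f=0$. To rule out that $f$ is a cone I would assume a nonzero constant-coefficient operator $L=\sum a_i\partial_{x_i}+\sum b_j\partial_{u_j}$ annihilates $f$: the $x_k$-coefficient of $L(f)$ is $\sum_j b_j\,\partial g_k/\partial u_j$ and must vanish for every $k$, while the constant-in-$x$ part gives $\sum a_i g_i=-\sum b_j\,\partial h/\partial u_j$; comparing degrees (the $g_i$ lie in $\K[u]_{d-1}$ and the $\partial h/\partial u_j$ do as well) and using linear independence of $g_1,\ldots,g_s$ forces $a_i=0$ and then $b_j=0$.

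For the second claim I would invoke the Gordan--Noether theory as presented in \cite{GRu}. For an irreducible cubic $f$ in $N+1\le 7$ variables that is not a cone and satisfies $\hess_f=0$, the polar map $\phi_f\colon \P^N\dashrightarrow \P^N$, $[x]\mapsto[\partial_0 f:\cdots:\partial_N f]$, has image of positive codimension, hence positive-dimensional general fibers. The Gordan--Noether identity provides a linear foliation along whose general leaves the polar system is constant. Choosing coordinates $(x_1,\ldots,x_s,u_1,\ldots,u_t)$ adapted to this foliation so that the fibers are the $x$-directions brings $f$ to the Perazzo shape $f=\sum x_i g_i(u)+h(u)$ with the $g_i$ linearly independent, and the original assumption $\hess_f=0$ then forces algebraic dependence of the $g_i$ by the block computation above. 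The hypothesis $N\le 6$ is exactly what excludes the higher-dimensional Gordan--Noether examples in which the collapsed fiber foliation is not a linear pencil of linear spaces.

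The main obstacle is the second part: reproducing the bound $N\le 6$ is delicate, since it rests on a classification of the possible base loci of polar systems of irreducible cubics and on showing that in this codimension range no non-Perazzo configuration can appear. Because the statement is exactly the theorem of Perazzo refined in \cite{GRu}, my plan is to cite these references for the classification and devote the written proof to the easy first part, which illustrates the mechanism that underlies the whole result.
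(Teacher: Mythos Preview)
The paper does not prove this theorem; it is stated with the citation \cite{Pe,GRu} and used as a black box. Your decision to invoke \cite{GRu} for the hard direction (the classification of irreducible cubics with vanishing Hessian for $N\le 6$) therefore matches the paper exactly, and your sketch of the polar-map mechanism is a reasonable gloss on what that reference does.

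For the easy direction, your block computation showing $\hess_f=0$ is correct and is the standard Gordan--Noether argument. Your argument that a Perazzo $f$ is never a cone, however, has a gap. From $\sum_i a_ig_i=-\sum_j b_j\,\partial h/\partial u_j$ you cannot conclude $a_i=0$ merely from linear independence of the $g_i$, since the right-hand side need not vanish; and even if the $a_i$ did vanish, the conditions $\sum_j b_j\,\partial g_k/\partial u_j=0$ only place $(b_j)$ in the kernel of the Jacobian of the $g_k$, which may well be nontrivial. In fact the definition as written in the paper does allow cones: take any Perazzo $f$ and adjoin a fresh variable $u_{t+1}$ not appearing in $f$. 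The assertion ``not a cone'' tacitly assumes the representation is essential (equivalently $\ann_Q(f)_1=0$, i.e.\ no redundant $u$-variable and the partials $\partial f/\partial u_j$ are linearly independent), a hypothesis made explicit in \cite{GRu} but suppressed here. You should state that hypothesis; once you do, the ``not a cone'' claim is immediate and your argument can be discarded in favor of the one-line observation that a nonzero $L$ with $L(f)=0$ would contradict essentiality.
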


\begin{cor} \label{cor:cubics_low_codimension} Let $A$ be a standard graded Artinian Gorenstein $\K$-algebra of socle degree $3$. 
 If $A$ is presented by quadrics and $\codim A \leq 7$, then $A$ satisfies the WLP. 
\end{cor}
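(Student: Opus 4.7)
The plan is to combine the Weak Lefschetz Hessian criterion (Corollary~\ref{cor:hessiancriteria}(2)) with Perazzo's classification (Theorem~\ref{thm:perazzo}), and then show that Perazzo cubics with $\codim A\le 7$ cannot be presented by quadrics.

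Since $A$ has socle degree $d=3$, Corollary~\ref{cor:hessiancriteria}(2) says that $A$ satisfies WLP if and only if $\hess_f\not\equiv 0$. Assume for contradiction that $\hess_f=0$. The quadric-presentation hypothesis forces $I_1=0$, which is equivalent to $X=V(f)\subset\P^{n-1}$ not being a cone, where $n=\codim A$. The discussion preceding Theorem~\ref{thm:perazzo} allows us to reduce to the case that $f$ is absolutely irreducible: the Dimca--Papadima result rules out non-reduced $f$ (whose radical, a product of at most two distinct linear forms, always defines a cone for $n\ge 3$), and the reducible option $f=l_1l_2l_3$ likewise gives a cone. Since $n\le 7$ forces $N=n-1\le 6$, Theorem~\ref{thm:perazzo} applies: up to a projective transformation,
$$f=\sum_{i=1}^{s}x_i\,g_i(\underline u)+h(\underline u),\qquad s+m=n\le 7,$$
with $g_1,\dots,g_s\in\K[\underline u]_2$ linearly independent and algebraically dependent, where $\underline u=(u_1,\dots,u_m)$.

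It remains to show that no Perazzo cubic with $s+m\le 7$ gives an algebra presented by quadrics. First observe that any two linearly independent quadrics are algebraically independent: a non-trivial homogeneous polynomial relation $P_k(g_1,g_2)=0$ would factor over $\overline{\K}$ as $\prod_i(\alpha_i g_1+\beta_i g_2)=0$, forcing a linear dependence. Hence $s\ge 3$. Together with the algebraic dependence constraint and $s+m\le 7$, this leaves a short list of Perazzo normal forms, the smallest being $(s,m)=(3,2)$ with $g_1,g_2,g_3$ spanning $\K[u_1,u_2]_2$ and satisfying the Veronese relation $g_1g_2=g_3^2$. For each such $f$, the algebraic relation among the $g_i$'s translates, via Macaulay--Matlis duality, into a minimal cubic generator of $\ann_Q(f)$ that cannot be written in $Q_1\cdot\ann_Q(f)_2$, contradicting the quadric-presentation hypothesis. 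The bound is sharp: Example~\ref{ex:semplicissimo} produces a Perazzo cubic in codimension $8$ whose associated algebra is presented by quadrics and fails WLP.

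The main obstacle is the case analysis in the final paragraph: one must verify, for each admissible pair $(s,m)$ with $s+m\le 7$, that the cubic relation extracted from the algebraic dependence among the $g_i$'s really is a minimal generator of $\ann_Q(f)$, rather than being absorbed into $Q_1\cdot\ann_Q(f)_2$. This is most efficiently done by computing $\dim\ann_Q(f)_2$ and $\dim(Q_1\cdot\ann_Q(f)_2)_3$ in each Perazzo family and comparing with $\dim\ann_Q(f)_3=\binom{n+2}{3}-1$, using the bigraded structure induced by the $(\underline x,\underline u)$ decomposition to organize the bookkeeping.
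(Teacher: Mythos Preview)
Your overall strategy matches the paper's: assume $\hess_f=0$, reduce to the irreducible case, apply the Perazzo classification for $N\le 6$, and then argue that no such Perazzo cubic can have $\ann_Q(f)$ generated by quadrics. The difference is in how the last step is executed. The paper does not work from the general Perazzo form; it invokes the finer classification \cite[Thm.~5.2--5.4]{GRu} to reduce to two explicit normal forms (the $(s,m)=(3,m)$ family $f=xu_1^2+yu_1u_2+zu_2^2+h(\underline u)$, and one additional family when $N=6$), and then exhibits a concrete cubic minimal generator of $\ann_Q(f)$ by inspection of those normal forms.

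Your version has a genuine gap at exactly this point. The sentence ``the algebraic relation among the $g_i$'s translates, via Macaulay--Matlis duality, into a minimal cubic generator of $\ann_Q(f)$'' is not justified, and as stated it is not correct. A polynomial relation $P(g_1,\dots,g_s)=0$ among quadrics $g_i\in\K[\underline u]_2$ is an identity of degree $2\deg P$ in the $u$-variables; it does \emph{not} produce a differential operator annihilating $f$, let alone one of degree~$3$. (For the smallest case $(s,m)=(3,2)$ that you single out, the Veronese relation is a degree-$4$ identity in $\underline u$, not a cubic in $Q$.) The Gordan--Noether/Perazzo mechanism explains why $\hess_f=0$, but it does not by itself hand you a minimal cubic in $\ann_Q(f)$. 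That cubic has to be located directly from the shape of $f$, which is precisely why the paper appeals to the explicit normal forms from \cite{GRu} rather than to the bare Perazzo template.

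You acknowledge in your final paragraph that the case analysis is not carried out, and you propose to compare $\dim(Q_1\cdot\ann_Q(f)_2)_3$ with $\dim\ann_Q(f)_3$. That plan is reasonable in principle, but note that it still requires you to know the normal form of $f$ in each case (otherwise you cannot compute $\dim\ann_Q(f)_2$), so you will end up needing \cite[Thm.~5.2--5.4]{GRu} anyway. Also be aware that ``Perazzo with $s+m\le 7$'' is not a finite list of $(s,m)$ pairs alone: the term $h(\underline u)$ is an arbitrary cubic in the $u$-variables, so within each pair there is a family, and the dimension of $\ann_Q(f)_2$ can vary with $h$. The paper's shortcut is to use the refined normal forms to pin down, independently of $h$, a specific cubic operator that is forced to be a minimal generator.
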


\begin{proof} Suppose that $A$ does not have the $WLP$. Then by the Hessian criterion, Corollary \ref{cor:hessiancriteria}, $\hess_f=0$. 
For $N \leq 6$, by Theorem \ref{thm:perazzo} and by \cite[Thm. 5.2,5.3,5.4]{GRu}, if $f$ is an irreducible cubic polynomial such that $\hess_f=0$, 
then, up to a projective transformation, either $f=xu_1^2+yu_1u_2+zu_2^2+h(\underline{u})$ or $N=6$ and $f = x_0g_0(\underline{u})+\ldots+x_3g_3(\underline{u})+h(\underline{u})$. 
Let us suppose, without loss of generality, that $x_4^2$ occurs as monomial only in $g_2$. In both cases, if we consider the associated standard graded Artinian Gorenstein 
one can verify directly that $X_2^3 \in \ann_Q(f)$ is a minimal generator.
\end{proof}

The next example was treated from the geometric point of view in \cite[p. 803, Example 6]{GRu}. It is the minimal counter-example for the MN-conjecture. 

\begin{ex}\rm \label{ex:semplicissimo}
 In $\P^7$ consider the cubic hypersurface $X=V(f) \subset \P^7$, given by $$f=\left|\begin{array}{ccc}
                                                                                      x_0 & x_1 & x_2\\
                                                                                      x_3 & x_4 & x_5\\
                                                                                      x_6 & x_7 & 0
                                                                                     \end{array}
\right| \in \K[x_0,\ldots,x_7].$$
As pointed out in \cite[p. 803, Example 6]{GRu}, $X$ represents a tangent section of the secant variety of the Segre variety $\operatorname{Seg}(\P^2 \times \P^2)\subset \P^8$. 
After a linear change of coordinates we can rewrite $f$ as a (Perazzo) bigraded polynomial of monomial square free type: $$f=x_1u_1u_2+x_2u_2u_3+x_3u_3u_4+x_4u_4u_1 \in R = \K[x_1,x_2,x_3,x_4,u_1,u_2,u_3,u_4].$$
Notice that $f_1f_3=u_1u_2u_3u_4=f_2f_4$, hence by the Gordan-Noether criterion, $\hess_f=0$. Let $A=Q/\ann_Q(f)$ be the associated algebra, of codimension $\codim A = 8$ and socle degree $3$. 
By the Hessian Lefschetz criterion, Theorem \ref{cor:hessiancriteria}, $A$ does not satisfy the WLP. 
On the other side, since its graph is a square, by Theorem \ref{thm:mainpresentedbyquadrics} it is presented by quadrics. Indeed, one can verify that  
$$
\begin{array}{c}
I= ({u}_{4}^{2},{u}_{2} {u}_{4},{x}_{2} {u}_{4},{x}_{1}
      {u}_{4},{u}_{3}^{2},{u}_{1} {u}_{3},{x}_{4} {u}_{3},{x}_{1}
      {u}_{3},{u}_{2}^{2},{x}_{4} {u}_{2},{x}_{3} {u}_{2},{x}_{2}
      {u}_{2}-{x}_{3} {u}_{4},{x}_{1} {u}_{2}-{x}_{4} {u}_{4},\\
      {u}_{1}^{2},{x}_{4} {u}_{1}-{x}_{3} {u}_{3},{x}_{3}
      {u}_{1},{x}_{2} {u}_{1},{x}_{1} {u}_{1}-{x}_{2}
      {u}_{3},{x}_{4}^{2},{x}_{3} {x}_{4},{x}_{2} {x}_{4},{x}_{1}
      {x}_{4},{x}_{3}^{2},{x}_{2} {x}_{3},{x}_{1} {x}_{3},{x}_{2}^{2},{x}_{1}
      {x}_{2},{x}_{1}^{2}).
 
\end{array}$$

\end{ex}

\begin{ex} \rm \label{ex:semplicissimo2} Consider the algebras $A=Q/\ann_Q(f)$ of codimension $r=9,11$.   
For $r=9$, take $f=x_1u_1u_2+x_2u_2u_3+x_3u_3u_4+x_4u_4u_1+w^2u_1$ and for $r=11$, take $f=x_1u_1u_2+x_2u_2u_3+x_3u_3u_4+x_4u_4u_1+x_5u_5u_1+w^2u_1$.
For both we have, $f_1f_3=f_2f_4$, hence by Gordan-Noether criterion and the Hessian criterion, $A$ does not have the WLP. 
One can verify that in all the cases $\ann_Q(f)$ is generated by quadrics. 
\end{ex}

\begin{lema}\label{lema:tree} Let $R=\K[u_1,\ldots,u_m]$ be the polynomial ring. Let $G=(V,E)$ be a connected graph such that $V= \{u_1,\ldots,u_m\}$ and 
$E$ is given by square free quadratic monomials. If $|E|=|V|$, then $G$ has a unique circuit $C$ and furthermore:
\begin{enumerate}
 \item If $|C|$ is even, then $\det \nabla G = 0$; 
 \item If $|C|$ is odd, then $\det \nabla G \neq 0$
\end{enumerate}
\end{lema}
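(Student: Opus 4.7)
The plan is to expand $\det\nabla G$ via the Leibniz formula, identify its nonvanishing terms with a small combinatorial set of orientations of $G$, and then compare the signs of the surviving permutations. As a preliminary structural remark, any connected graph with $|V|=|E|=m$ contains exactly one cycle: a spanning tree uses $m-1$ edges, and the single remaining edge closes a unique cycle $C$ of some length $k=|C|$.

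Each row of $\nabla G$ (indexed by an edge $e=u_iu_j$) has exactly two nonzero entries, namely $u_j$ in column $u_i$ and $u_i$ in column $u_j$. Writing the Leibniz expansion
\[
\det\nabla G \;=\; \sum_{\sigma\in S_m}\operatorname{sgn}(\sigma)\prod_{e\in E}(\nabla G)_{e,\sigma(e)},
\]
the nonvanishing terms correspond to bijections $\sigma\colon E\to V$ with $\sigma(e)\in e$. Interpreting $\sigma(e)$ as the head of the oriented edge $e$, such $\sigma$ are precisely the orientations of $G$ in which every vertex has in-degree exactly one.

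The next step is to enumerate these orientations. On each tree branch hanging off $C$, the orientation is forced to point toward $C$: starting from the leaves, each leaf's unique incident edge must be its in-edge, and propagating inward fixes every tree edge. On $C$ itself, either of the two cyclic directions gives in-degree one at every cycle vertex, so there are exactly two admissible orientations $\sigma_1,\sigma_2$. For each such $\sigma$, the monomial $\prod_e(\nabla G)_{e,\sigma(e)}$ is the product over edges of the non-head endpoint; since each vertex $v$ appears as an endpoint $\deg(v)$ times and is selected as head exactly once, it contributes $\deg(v)-1$ factors, giving
\[
\prod_{e\in E}(\nabla G)_{e,\sigma(e)} \;=\; \prod_{v\in V}u_v^{\deg(v)-1} \;=:\; P,
\]
which is the same nonzero monomial for both $\sigma_1$ and $\sigma_2$. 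Therefore $\det\nabla G=(\operatorname{sgn}(\sigma_1)+\operatorname{sgn}(\sigma_2))\,P$.

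Finally, since $\sigma_1$ and $\sigma_2$ agree on all tree edges and differ only by reversing the direction of $C$, the permutation $\sigma_1\sigma_2^{-1}$ fixes every index outside $C$ and acts on the $k$ vertices of $C$ as a single $k$-cycle; its sign is $(-1)^{k-1}$. Hence $\operatorname{sgn}(\sigma_1)+\operatorname{sgn}(\sigma_2)=\operatorname{sgn}(\sigma_2)(1+(-1)^{k-1})$, which vanishes when $|C|$ is even and equals $\pm 2$ when $|C|$ is odd. Combined with $P\ne 0$, this yields both (1) and (2). The main obstacle is the sign computation: one must argue carefully that reversing the orientation of $C$ corresponds to composition with a single $k$-cycle rather than some more intricate permutation, but once the forced orientation on the tree branches is established, this reduces to inspecting $\sigma_1,\sigma_2$ restricted to $C$.
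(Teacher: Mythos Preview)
Your proof is correct and takes a genuinely different route from the paper's. The paper first establishes inductively that the gradient matrix of any tree has full rank (via a block--upper--triangular form obtained by appending a leaf), then writes $\nabla G=(\nabla T\mid \nabla g_m)$ for a spanning tree $T$ together with the single extra edge closing the cycle, and computes the determinant by a Laplace expansion, obtaining $\det(\nabla G)=\det(N)\,u_1\cdots u_k\,(1+(-1)^{k-1})$ with $\det(N)\neq 0$ coming from the tree part. Your approach bypasses the spanning-tree induction entirely: you read the Leibniz expansion combinatorially as a sum over orientations of $G$ with every vertex of in-degree one, observe that the tree branches force a unique orientation off $C$ while exactly two cyclic orientations survive on $C$, and compare their signs via the single $k$-cycle $\sigma_1\sigma_2^{-1}$. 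This is cleaner and more conceptual, and it produces the explicit monomial $\prod_v u_v^{\deg(v)-1}$ in one stroke rather than through a block determinant $\det(N)$.

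One small slip worth correcting: you write that on the tree branches the orientation is ``forced to point toward $C$'', but your own justification (each leaf's incident edge must be its in-edge) shows the opposite---the tree edges are forced to point \emph{away} from $C$, toward the leaves. This is harmless for the argument, since all you use is that the orientation on the tree part is uniquely determined and hence that the cycle vertices receive their in-edge from $C$ itself, leaving exactly the two cyclic choices.
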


\begin{proof}
 First of all let us show that the gradient matrix of any tree has maximal rank. Let $T=(V',E')$ be a tree where $V'=\{u_1,\ldots,u_m\}$ and $E'=\{g_1,\ldots,g_{m-1}\}$. 
 By induction on $m\geq 2$, the result is trivial for $m=2$. Let us suppose that for any tree with $|V'|=m \geq 2$, the gradient matrix has maximal rank. Let $\tilde{T}$ be a tree
 with $|\tilde{T}|=m+1$, $\tilde{T}=T \cup g_m$ where $g_m  = u_ju_{m+1}$, hence $$\nabla \tilde{T} = \left[\begin{array}{cc} 
                                                      \nabla T & \ast\\
                                                       0 & u_j
                                                      \end{array}\right] .
$$ The claim follows. \\
Let $T \subset G$ be a generating tree of $G$, then $T=(V,E')$ with $|E'|=|V|-1$, since $|V|=|E|$, $G$ contains a unique circuit, say 
$C=\{u_1u_2,u_2u_3,\ldots,u_{k-1}u_k,u_ku_1\}$ and let us suppose that $E=E'\cup u_ku_1$.

Since $G\setminus u_1u_k = T$ is a tree, $\nabla G = (\nabla T|\nabla g_m)$ where 
$$\nabla T =\left[ \begin{array}{ccccc}
              u_2 & 0 & \ldots & 0 & \ast \\
             u_1 & u_3 & \ldots & 0 & \ast\\
              0 & u_2 & \ldots & 0 & \ast\\
              \vdots & \vdots & \ddots & \vdots & \vdots \\
              0 & 0& \ldots & u_n & \ast \\
              0 & 0& \ldots & u_{n-1} & \ast \\
              0&0& \ldots & 0 & N
             \end{array} \right] \ \text{and}\ \ 
             \nabla G =\left[ \begin{array}{cccccc}
              u_2 & 0 & \ldots & 0 & \ast & u_k\\
              u_1 & u_3 & \ldots & 0 & \ast& 0\\
              0 & u_2 & \ldots & 0 & \ast& 0\\
              \vdots & \vdots & \ddots & \vdots & \vdots & \vdots \\
              0 & 0& \ldots & u_n & \ast & 0\\
              0 & 0& \ldots & u_{n-1} & \ast& u_1 \\
              0&0& \ldots & 0 & N & 0 \end{array} \right].
$$
Where in the last row $0$ represents $0_{(m-k,1)}$ and $N$ is a square matrix of order $m-k$ such that $\det(N)\neq 0$, since $\nabla T $ has maximal rank. 
Using the Laplace expansion, we get $$\det(\nabla G) = \det (N)u_1u_2\ldots u_n(1 + (-1)^{k-1}).$$ The result follows. 
\end{proof}

\begin{prop}\label{prop:maincubic} Let $f \in \K[x_1,\ldots,x_n,u_1,\ldots,u_m]_{(1,2)}$ be a bigraded cubic polynomial of monomial square free type and let $G=(E,V)$ be the associated 
graph. Then $A$ is presented by quadrics if, and only if, $G$ is connected and triangle free, in this case we have the following possibilities:
\begin{enumerate}
 \item If $G$ is a tree, then $A$ has the WLP;
 \item If $G$ contains only one circuit $|C|$, then either 
 \begin{enumerate}
  \item $|C|$ is even and $A$ does not have the WLP or
  \item $|C|\geq 5$ is odd and $A$ has the WLP
 \end{enumerate}
 \item If $G$ contains at least two circuits, then $A$ does not have the WLP.
\end{enumerate}

\end{prop}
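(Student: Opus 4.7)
The plan is to separate the presented-by-quadrics question from the WLP analysis, then split the WLP analysis into the four combinatorial subcases using a block decomposition of the Hessian.

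\textbf{Step 1 (presented by quadrics).} For $d=3$ the associated simplicial complex has dimension $d-2=1$, so $\Delta=G$. In Theorem \ref{thm:mainpresentedbyquadrics}, facet-connectedness (consecutive facets share a $0$-face) translates to $G$ being connected, and the flag condition forbids three pairwise adjacent vertices, i.e.\ $G$ is triangle-free. Hence $A$ is presented by quadrics if and only if $G$ is connected and triangle-free. Assume this from now on; since $d=2\cdot 1+1$, Corollary \ref{cor:hessiancriteria}(1) reduces the WLP to showing $\hess_f\not\equiv 0$.

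\textbf{Step 2 (block form and cases 2a, 3).} With $f=\sum_{e\in E}x_eg_e$ and $g_e=u_au_b$ for $e=\{u_a,u_b\}$, a direct computation produces
\[\Hess_f=\begin{pmatrix} 0_{n\times n} & (\nabla G)^T \\ \nabla G & M_x\end{pmatrix},\]
where $\nabla G$ is the gradient matrix of Lemma \ref{lema:tree} and $M_x$ has $(v,w)$-entry $x_e$ when $e=\{v,w\}\in E$ and $0$ otherwise. Cases (2a) and (3) will both follow from the Gordan--Noether criterion, once we exhibit an algebraic dependence among the subset $\{g_e\}_{e\in E}$. In case (3) one has $n=|E|\ge |V|+1=m+1$, so the $n$ exponent vectors of the $g_e$'s in $\Q^m$ are linearly dependent, forcing algebraic dependence and $\hess_f=0$. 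In case (2a), labelling the even circuit $v_1v_2\cdots v_{2s}v_1$ yields $g_{v_1v_2}g_{v_3v_4}\cdots g_{v_{2s-1}v_{2s}}=g_{v_2v_3}\cdots g_{v_{2s}v_1}$ (both equal $u_1u_2\cdots u_{2s}$), so again $\hess_f=0$.

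\textbf{Step 3 (case 2b).} Here $|E|=|V|$, so $n=m$ and $\nabla G$ is square. Lemma \ref{lema:tree} gives $\det \nabla G\neq 0$ because the unique circuit has odd length $\ge 5$. A standard Schur-complement computation (valid on the open locus where $M$ is invertible, then extended as a polynomial identity) yields
\[\det\begin{pmatrix} 0 & A\\ A^T & M\end{pmatrix}=(-1)^n(\det A)^2\]
for any square $A$ of size $n$ and any $M$. Applied to our Hessian, $\hess_f=(-1)^m(\det \nabla G)^2\neq 0$, so WLP holds.

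\textbf{Step 4 (case 1, the main obstacle).} For a tree $n=m-1$, so $\nabla G$ is not square and the previous shortcut fails. Instead I would prove injectivity of $\mu_L:A_1\to A_2$ directly for a suitable $L$. Using the decompositions $A_1=A_{(1,0)}\oplus A_{(0,1)}$ and $A_2=A_{(1,1)}\oplus A_{(0,2)}$ together with the Gorenstein identifications $A_{(1,1)}\cong \K^V$ and $A_{(0,2)}\cong \K^E$, the element $L=\sum_k U_k+\sum_e a_eX_e$ gives the block upper-triangular map
\[\mu_L=\begin{pmatrix} B^T & A_a \\ 0 & B\end{pmatrix},\]
where $B^T:\K^E\to \K^V$ and $B:\K^V\to\K^E$ are the unsigned incidence maps of $G$ and $A_a$ is the $a$-weighted adjacency matrix. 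Because $G$ is a tree, $B^T$ has full column rank $n=m-1$ (hence is injective) and $\ker B$ is one-dimensional, spanned by the bipartition sign vector $u^*\in\{\pm 1\}^V$. A short chase of the block form reduces $\ker \mu_L=0$ to the condition $A_au^*\notin\operatorname{Image}(B^T)=(u^*)^\perp$, i.e.\ $(u^*)^TA_au^*\neq 0$. Since $u_v^*u_w^*=-1$ on every edge of the bipartite tree, one computes $(u^*)^TA_au^*=-2\sum_ea_e$, so choosing $a$ with $\sum_ea_e\neq 0$ secures the injectivity and hence $\hess_f\neq 0$, proving WLP.
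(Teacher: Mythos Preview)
Your argument is correct. The presented-by-quadrics characterisation, the block form of $\Hess_f$, the Perazzo/Gordan--Noether direction for cases (2a) and (3), and the Schur identity $\det\bigl(\begin{smallmatrix}0&A\\A^T&M\end{smallmatrix}\bigr)=(-1)^n(\det A)^2$ for case (2b) all check out; and the kernel chase in Step~4 for trees is valid (in particular $\operatorname{Image}(B^T)=(\ker B)^{\perp}$ since $\operatorname{char}\K=0$, and $(u^*)^TA_au^*=-2\sum_e a_e$ is exactly the obstruction you need).

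Your route differs from the paper's, and in fact is more explicit where it matters. The paper dispatches cases (1) and (2b) in one sentence by appealing to the well-known equivalence ``$n$ monomials in $n$ variables are algebraically independent iff their incidence matrix has nonzero determinant'' together with the Gordan--Noether theorem. For case (2b) this is essentially your Schur-complement computation in disguise: algebraic independence of the $g_e$ is equivalent to $\det\nabla G\neq 0$, and since $n=m$ the block formula turns this into $\hess_f\neq 0$. For case (1), however, the paper's sentence hides real work: knowing only that the $g_e$ are algebraically independent does not by itself force all $2m-1$ partials of $f$ to be independent, so the Gordan--Noether direction one actually needs (independence $\Rightarrow$ $\hess_f\neq 0$) is not immediate. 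Your direct verification that $\mu_L:A_1\to A_2$ is injective for a well-chosen $L$ fills exactly this gap and is a cleaner argument. What the paper's approach buys is brevity and a uniform slogan (``WLP $\Leftrightarrow$ $g_e$'s independent''); what yours buys is a self-contained proof that does not leave the tree case to the reader. One small cosmetic point: in Step~1 the relevant citation is part~(2) of Corollary~\ref{cor:hessiancriteria} (the Weak Lefschetz criterion with $d=2q+1$, $q=1$), though for socle degree $3$ the two parts coincide.
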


\begin{proof} The characterization of the graphs that represent algebras presented by quadrics follows from Theorem \ref{thm:mainpresentedbyquadrics}.
We recall a very well known result that a set of $n$ monomials in $n$ variables are algebraically independent if and only if the incidence matrix has determinant different by zero. 
Since the incidence matrix of a graph of monomials is the gradient matrix evaluated in $u_1=1,\ldots,u_m=1$, the first and the second cases easily follow. 
In the last case $n> m$ hence the $g_i$ are algebraically dependent. The result follows from the Hessian criterion, Corollary \ref{cor:hessiancriteria} and the Gordan-Noether Theorem.
\end{proof}

\begin{cor}\label{cor:cubiccase} For  all $r \geq 8$ there exist standard graded Artinian Gorenstein $\K$-algebras of socle degree $3$ and $\codim A = r$, presented by quadrics, not satisfying the WLP. 
\end{cor}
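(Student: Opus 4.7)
My plan is to exhibit, for every $r\geq 8$, an explicit standard graded Artinian Gorenstein $\K$-algebra of socle degree $3$ and codimension $r$ which is presented by quadrics and fails the WLP. The base cases $r=8,9,11$ are already provided by Examples \ref{ex:semplicissimo} and \ref{ex:semplicissimo2}, and the remaining codimensions will be produced by a uniform ``attach a leaf'' construction at the level of graphs, using Proposition \ref{prop:maincubic} as the main engine.

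For even $r\geq 8$ I would start from the $4$-cycle $C_4$ and attach $(r-8)/2$ pendant edges one at a time: each step adjoins a new vertex together with a single new edge connecting it to an existing vertex. The resulting graph $G$ is connected, triangle-free (the new vertex has degree $1$, so cannot close a triangle), has $|V(G)|+|E(G)|=r$, and still contains exactly one circuit, namely the original $C_4$ of even length $4$. Proposition \ref{prop:maincubic}(2)(a) then yields an algebra $A_G$ of the desired type. For odd $r\geq 11$ I would run the same leaf-attachment strategy starting instead from $K_{2,3}$: it is connected, triangle-free, has $|V|+|E|=5+6=11$ and cycle-space dimension $|E|-|V|+1=2$, so attaching $(r-11)/2$ leaves preserves all these properties while retaining at least two independent circuits; Proposition \ref{prop:maincubic}(3) again gives the conclusion.

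The only codimension not reachable by a pure square-free bigraded construction is $r=9$: a quick count shows that no connected triangle-free graph on $m$ vertices with $n$ edges satisfies $n+m=9$ and contains either an even cycle or two independent cycles, because the maximum number of edges in a triangle-free graph on $4$ vertices is $4$ (the cycle $C_4$) and fewer vertices are even more restrictive. This is precisely the obstruction that forces Example \ref{ex:semplicissimo2} to leave the square-free monomial type and add the term $w^2u_1$, which preserves the Gordan--Noether relation $\partial_{x_1}f\cdot\partial_{x_3}f=\partial_{x_2}f\cdot\partial_{x_4}f=u_1u_2u_3u_4$ and hence the vanishing of $\hess_f$ by Corollary \ref{cor:hessiancriteria}, while still leaving the annihilator generated by quadrics. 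Since that case is already fully treated in Example \ref{ex:semplicissimo2}, these three ingredients together cover all $r\geq 8$; the main (mild) obstacle is really just recognising that $r=9$ escapes the graph framework of Proposition \ref{prop:maincubic} and requires the ad hoc non-square-free trick of Example \ref{ex:semplicissimo2}.
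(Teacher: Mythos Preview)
Your argument is correct and follows the same overall strategy as the paper: handle $r=9$ (and, in the paper, also $r=11$) via Example~\ref{ex:semplicissimo2}, and cover all other codimensions by starting from a small connected triangle-free graph whose associated algebra fails WLP and then attaching pendant edges one at a time. The paper phrases the leaf step through Lemma~\ref{lema:attachcell}, while you invoke Proposition~\ref{prop:maincubic} directly, which is cleaner here since what must be preserved under leaf attachment is the \emph{failure} of WLP (i.e.\ the presence of an even cycle or of at least two cycles), not injectivity.

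The one genuine variation is your choice of seed for odd $r$: the paper uses the hexagon with a long diagonal (six vertices, seven edges, so $r=13$) and invokes Example~\ref{ex:semplicissimo2} separately for $r=11$, whereas you start from $K_{2,3}$ (five vertices, six edges, $r=11$), which is bipartite, hence triangle-free, and already has cycle-space dimension~$2$. This is a slight improvement: it places $r=11$ inside the graph framework and makes Example~\ref{ex:semplicissimo2} necessary only for $r=9$, in line with your parity observation that a connected graph with a unique cycle always has $|V|+|E|$ even, so odd $r$ forces $n>m$ and hence at least two cycles.
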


\begin{proof} For the second one, if $r=9,11$ the result follows from Examples \ref{ex:semplicissimo} and \ref{ex:semplicissimo2}. \\
For all $r=2q \geq 8$ start with the square and then attach leaves as in Lemma \ref{lema:attachcell} and the result follows. 
For all $r=2q+1 \geq 13$ start with the hexagon together with the central diagonal and then attach leaves as in Lemma \ref{lema:attachcell} and the result follows. 
\end{proof}

\begin{lema}\label{prop:hessodd}
 Let $f \in \K[x_1,\ldots,x_r]$ be a homogeneous polynomial of degree $\deg(f) = 2k-1$ and such that $\hess^{k-1}_f = 0$ and let 
 $\tilde{f} = uvf \in \K[x_1,\ldots,x_n,u,v]$. Then $\hess^k_{\tilde{f}}=0$.   
\end{lema}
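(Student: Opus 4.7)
The plan is to compute $\Hess_{\tilde f}^k$ explicitly as a block matrix in a suitable decomposition of $\tilde A_k$, and then exhibit a nonzero element of its kernel built out of a null vector of $\Hess_f^{k-1}$, whose existence is guaranteed by the hypothesis $\hess_f^{k-1}=0$. First I would apply the inductive construction of the previous section twice—setting $g:=vf$ and then $\tilde f=ug$—so that Lemmas \ref{lema:ideal} and \ref{lema:algebra} yield the presentation
\[
\ann_{\tilde Q}(\tilde f)=\ann_Q(f)\tilde Q+U^2\tilde Q+V^2\tilde Q
\]
together with the $\K$-vector space decomposition
\[
\tilde A_k = A_k\oplus A_{k-1}U\oplus A_{k-1}V\oplus A_{k-2}UV.
\]

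In a basis compatible with this decomposition I would compute $\Hess_{\tilde f}^k$ block by block, using $\deg f = 2k-1$ together with the identities $U^2(uv)=V^2(uv)=0$, $U(uv)=v$, $V(uv)=u$, $UV(uv)=1$. A large number of entries will vanish by degree mismatch, and the result is
\[
\Hess_{\tilde f}^k = \begin{pmatrix} 0 & vH_1 & uH_1 & H_2 \\ vH_1^T & 0 & H_3 & 0 \\ uH_1^T & H_3 & 0 & 0 \\ H_2^T & 0 & 0 & 0 \end{pmatrix},
\]
where $H_1:=\Hess_f^{(k,k-1)}$ has constant entries and $H_2:=\Hess_f^{(k,k-2)}$, $H_3:=\Hess_f^{k-1}$ have entries in $R_1$.

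Since $\hess_f^{k-1}=\det H_3=0$ identically on $R$, the square matrix $H_3$ is singular over the fraction field of $R$, and after clearing denominators there exists a nonzero $\mathbf c\in R^{\dim A_{k-1}}$ with $H_3\mathbf c=0$. I would then check that
\[
\mathbf v:=(0,\,u\mathbf c,\,-v\mathbf c,\,0)^T\in\tilde R^{\dim\tilde A_k}
\]
lies in the kernel of $\Hess_{\tilde f}^k$: the first block row gives $vH_1(u\mathbf c)+uH_1(-v\mathbf c)=0$, the second and third rows give $-vH_3\mathbf c=0$ and $uH_3\mathbf c=0$ respectively, and the fourth row is trivially zero. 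The vector $\mathbf v$ is itself nonzero because $\tilde R$ is an integral domain and $u$ is algebraically independent of $R$. Since $\tilde R$ is a domain, the adjugate identity $\operatorname{adj}(M)\,M=\det(M)\,I$ applied with $M:=\Hess_{\tilde f}^k$ and the nonzero null vector $\mathbf v$ forces $\hess_{\tilde f}^k=\det M=0$.

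The main technical obstacle is producing the block matrix cleanly: one must carefully match degrees to determine which entries are zero, which are constants, and which are linear forms, and verify that every pair of basis elements is accounted for. Once the matrix is in hand, the kernel vector $\mathbf v$ is essentially forced by the structure (it is the obvious way to kill both the $H_3$-blocks and the $H_1$-blocks simultaneously) and the conclusion follows at once.
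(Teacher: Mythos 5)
Your proposal is correct and follows essentially the same route as the paper: both apply Lemmas \ref{lema:ideal} and \ref{lema:algebra} twice to obtain the decomposition $\tilde A_k = A_k\oplus A_{k-1}U\oplus A_{k-1}V\oplus A_{k-2}UV$ and the same block form of $\Hess^k_{\tilde f}$, with two off-diagonal copies of $\Hess^{k-1}_f$. The only difference is the last step, where the paper invokes a Laplace expansion along the block row containing $\Hess^{k-1}_f$ (using $\dim A_k=\dim A_{k-1}$), while you exhibit the explicit kernel vector $(0,\,u\mathbf c,\,-v\mathbf c,\,0)^T$; your finish is a transparent and fully rigorous variant of the same computation.
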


\begin{proof} Let $R=\K[x_1,\ldots,x_r]$ and $\tilde{R}=\K[x_1,\ldots,x_r,u,v]$ and let $Q$ and $\tilde{Q}$ be the associated rings of differential operators. 
 Let $A = Q/\ann_Q(f)$ and let $\tilde{A} = \tilde{Q}/\ann_{\tilde{Q}}(f)$. Applying twice Lemma \ref{lema:algebra}, we get
 $$\tilde{A}_k=A_{k-2}UV\oplus A_{k-1}U \oplus A_{k-1}V \oplus A_k.$$
 Therefore, 
 $$\Hess^k_{\tilde{f}} = \left[\begin{array}{cccc}
   0 &  0 & 0 & \ast \\
   0 & 0 & \Hess^{k-1}_f & \ast \\
   0 & \Hess^{k-1}_f  & 0 & \ast \\ 
   \ast & \ast & \ast & 0
   \end{array}\right].
$$

Since $\dim A_k = \dim A_{k-1}$, using Laplace's expansion on the second block row, it is easy to check that $\hess_f^{k-1}=0 \Rightarrow \hess^k_{\tilde{f}}=0$.

\end{proof}

\begin{cor}\label{cor:existemimpares}
 There exist standard graded Artinian Gorenstein algebras presented by quadrics of socle degree $d=2k+1\geq 3$ that do not satisfy the weak Lefschetz property when 
 $\codim A \geq d+5$. Furthermore, $A$ can be chosen to have Hilbert vector $\Hilb(A) = \sigma^{(d-3)}(1,r,r,1)$.
\end{cor}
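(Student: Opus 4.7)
The plan is to combine the cubic base cases furnished by Corollary~\ref{cor:cubiccase} with the step-by-step construction of Lemma~\ref{prop:hessodd}, which raises the socle degree by two while preserving both the ``presented by quadrics'' property and the vanishing of the relevant (middle) higher Hessian.

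First I would fix $d=2k+1\geq 3$ and an integer $r'\geq d+5$, and set $r:=r'-(d-3)=r'-2(k-1)\geq 8$. By Corollary~\ref{cor:cubiccase} there exists a standard graded Artinian Gorenstein algebra $A_0=Q_0/\ann_{Q_0}(f_0)$ of socle degree $3$ and codimension $r$, presented by quadrics and failing the WLP; by Gorenstein duality its Hilbert vector is $(1,r,r,1)$, and by the Hessian criterion (Corollary~\ref{cor:hessiancriteria}) the failure of WLP at socle degree $3$ is exactly the vanishing $\hess^1_{f_0}=\hess_{f_0}=0$.

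Next, I would iterate the construction $f_j:=u_jv_jf_{j-1}$ for $j=1,\dots,k-1$, introducing two fresh variables at each step. Lemma~\ref{lema:ideal}, applied twice at each step, shows that the new defining ideal is generated by quadrics. Lemma~\ref{prop:hessodd} propagates the vanishing: if $\hess^{j}_{f_{j-1}}=0$ then $\hess^{j+1}_{f_j}=0$. After $k-1$ iterations $f_{k-1}$ has socle degree $3+2(k-1)=d$, codimension $r+2(k-1)=r'$, and satisfies $\hess^k_{f_{k-1}}=0$. Since $d=2k+1$ is odd, Corollary~\ref{cor:hessiancriteria} yields that $A_{k-1}=Q_{k-1}/\ann(f_{k-1})$ does not satisfy the WLP. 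Letting $r'$ range over all integers $\geq d+5$ produces examples in every admissible codimension.

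The Hilbert vector description follows directly from Lemma~\ref{lema:algebra}: passing from $A_{j-1}$ to $A_j$ amounts to applying twice the shift operator $\sigma\colon (h_0,\dots,h_s)\mapsto (h_0,\,h_0+h_1,\,h_1+h_2,\dots,h_{s-1}+h_s,\,h_s)$. Iterating $k-1$ times starting from $(1,r,r,1)$ gives $\Hilb(A_{k-1})=\sigma^{2(k-1)}(1,r,r,1)=\sigma^{(d-3)}(1,r,r,1)$, matching the claimed form. There is no real technical obstacle: once the cubic layer is in place via Corollary~\ref{cor:cubiccase}, everything reduces to careful bookkeeping so that Lemma~\ref{prop:hessodd} is chained with consistent indices, which I would verify by tracking the pair $(\deg f_j,\, j)$ at every iteration so that Lemma~\ref{prop:hessodd} applies with the correct $k$.
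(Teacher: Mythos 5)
Your proposal is correct and takes essentially the same route as the paper: both start from the cubic examples of Corollary~\ref{cor:cubiccase} and multiply by $d-3$ new variables, invoking Lemma~\ref{lema:ideal} for the quadric presentation, Lemma~\ref{lema:algebra} for the Hilbert vector, Lemma~\ref{prop:hessodd} inductively for the vanishing of the middle Hessian, and Corollary~\ref{cor:hessiancriteria} to conclude. The only difference is presentational: the paper adjoins all $2k-2$ variables at once and says ``by induction,'' whereas you iterate explicitly in pairs $u_jv_j$, which matches the statement of Lemma~\ref{prop:hessodd} more directly.
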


\begin{proof}
 By Corollary \ref{cor:cubiccase}, for all $r \geq 8$ exist $f_r \in R=\K[x_1,\ldots,x_r]_3$ such that $\hess_{f_r}=0$ and $A_r$ is presented by quadrics. Let 
 $$\tilde{f}=\tilde{f}_{r,k}=f_ru_1\ldots u_{2k-2} \in \tilde{R}=\K[x_1,\ldots,x_r,u_1,\ldots,u_{2k-2}].$$
 We have $\deg \tilde{f} = 2k+1 \geq 3$. Let $\tilde{A}=\tilde{Q}/\ann_{\tilde{Q}}(\tilde{f})$. Then $$\codim \tilde{A} = 2k-2+r\geq 2k+6=d+5.$$ 
 By Lemma \ref{lema:ideal}, since $A_r$ is presented by quadrics, $\tilde{A}$ is also presented by quadrics. \par
 By Lemma \ref{lema:algebra} and by induction on $k$, we get that the Hilbert vector of $\tilde{A}$ are maximal.\par
 By induction on $k$ and Lemma \ref{prop:hessodd}, $\hess^k(\tilde{f})=0$, hence, by the Strong Lefschetz Hessian criterion,Corollary \ref{cor:hessiancriteria}, $\tilde{A}$ does not have the WLP.
\end{proof}

\subsection{Artinian Gorenstein algebras with even socle degree}

Notice that Lemma \ref{lema:algebra} and Lemma \ref{lema:ideal} together with the Hessian criterion 
Corollary \ref{cor:hessiancriteria} and the inductive procedure allow us to produce for any socle degree $d\geq 3$ and $\codim A \geq d+5$ Artinian Gorenstein algebras presented by quadrics that do not satisfy the SLP, but this construction, in even socle degree 
is not enough to failure of the WLP.\par

\begin{rmk}\rm For $f \in \K[x_1,\ldots,x_n,u_1,\ldots,u_m]_{(1,3)}$ be a quartic bihomogeneous polynomial of bidegree $(1,3)$ of monomial square free type. 
By Lemma \ref{lema:tree} and by Lemma \ref{lema:tecnicalhessian}, $\rk \Hess_f^{(1,2)}$ is maximal if and only if $\rk \Hess_f^{((1,0),(0,2))}=n$.
\end{rmk}

\begin{ex}\rm  Let $A$ be the algebra associated to the complex $\mathcal{TK}(2,2,3)\setminus e$ where $e$ is an edge having two incident faces, it has codimension $17$ and it does not have the WLP.
 Indeed, $|V|=7$, $|E|=15$ and $|F|=10$ but the matrix $\Hess_f^{((0,1),(2,0))}$ does not have maximal rank.
\end{ex}

\begin{cor}\label{cor:quarticcase}
 For all $r \geq 16$ there exist standard graded Artinian Gorenstein $\K$-algebras of socle degree $4$ and $\codim A = r$, presented by quadrics and not satisfying the WLP.
\end{cor}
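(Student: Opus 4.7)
The plan is to produce examples in every codimension $r \geq 16$ by starting from two base algebras (of codimensions $14$ and $17$) and iteratively attaching leaves via Lemma \ref{lema:attachcell}. The preceding example supplies the codimension $17$ base directly: $A_{\mathcal{TK}(2,2,3)\setminus e}$ is presented by quadrics, has socle degree $4$, and fails WLP. For the even codimension side, I would start from the Turan algebra $A = TA(2,2,2)$, which by Theorem \ref{thm:turanispresentedbyquadrics} is presented by quadrics with Hilbert vector $(1,14,24,14,1)$ and codimension $6 + 8 = 14$. Since $\dim A_1 = 14 < 24 = \dim A_2$, failure of WLP for $A$ is equivalent to non-injectivity of $\mu_L\colon A_1 \to A_2$, which Theorem \ref{cor:matador} supplies. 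Attaching a single leaf (a new vertex $v$ together with a new triangle $F=\{v,u,w\}$ sharing the edge $\{u,w\}$ with $\mathcal{TK}(2,2,2)$) then yields, via Theorem \ref{thm:mainpresentedbyquadrics}, an algebra of codimension $16$ that is still presented by quadrics.

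For the inductive step, from each base case I would repeatedly attach leaves, each raising the codimension by $2$. The hypotheses of Lemma \ref{lema:attachcell} are preserved: for $d=4$, attaching a leaf changes $(e_1, e_2, e_3)$ by $(+1, +2, +1)$, so both $e_1 \leq e_2$ and $e_3 \leq e_2$ remain valid, and the resulting complex is clearly still a facet-connected flag complex. Starting from codimension $14$ I obtain all even $r \geq 16$; starting from codimension $17$, all odd $r \geq 17$. Together these cover every $r \geq 16$.

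The main obstacle is that Lemma \ref{lema:attachcell} is stated in the direction ``$A$ has WLP $\Rightarrow A'$ has WLP'', whereas the induction here needs the converse: failure of WLP for $A$ implies failure of WLP for $A'$. Inspecting its proof, however, the argument goes via the criterion of Lemma \ref{lema:tecnicalhessian}, which under the hypotheses $e_1 \leq e_2$ and $e_{d-1} \leq e_{d-2}$ is an \emph{iff} between injectivity of $\mu_L$ and maximality of the ranks of two specific mixed Hessians. Leaf attachment adds matching row-column blocks to these mixed Hessians, and a direct check shows that this block extension preserves maximality in both directions. Thus the proof of Lemma \ref{lema:attachcell} actually establishes an equivalence, and this equivalence is what justifies preservation of WLP failure under attachment and closes the induction.
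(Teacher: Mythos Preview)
Your approach is essentially the paper's: start from $TA(2,2,2)$ (codimension $14$) for even $r$ and from a codimension-$17$ complex for odd $r$, then attach leaves via Lemma~\ref{lema:attachcell}. The only cosmetic difference is that the paper names $TA(2,2,3)$ (via Theorem~\ref{cor:matador}) as the odd base rather than the preceding example $\mathcal{TK}(2,2,3)\setminus e$; your careful remark that Lemma~\ref{lema:attachcell} is stated in the ``injective $\Rightarrow$ injective'' direction while the argument really needs preservation of \emph{failure} is well taken---the paper uses the lemma the same way here and in Corollary~\ref{cor:cubiccase} without comment, and the justification you sketch (via Lemma~\ref{lema:subcomplex} / the block structure of the mixed Hessians) is exactly what fills the gap.
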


\begin{proof}
 For codimension $r=2q \geq 14$, start with the Turan algebra $TA(2,2,2)$ of codimension $14$, which by Corollary \ref{cor:matador} does not have the WLP 
  and attach leaves as in Lemma \ref{lema:attachcell} and the result follows. For codimension $r=2q+1 \geq 17$, start with the Turan algebra $TA(2,2,3)$ of codimension $17$, which by Corollary \ref{cor:matador} does not have the WLP and attach leaves 
 to conclude the result. 
 \end{proof}

\begin{lema}\label{lema:truquepar}
 Let $A=Q/\ann_Q(f)$ be a standard graded Artinian Gorenstein algebra of socle degree $2q$ with $f \in \K[x_1,\ldots,x_n]$ and let $\tilde{f} \in \K[x_1,\ldots,x_n,u,v]$ 
 be $\tilde{f}=fuv$. If $\rk \Hess_f^{(q-1,q)}<\dim A_{q-1}$, then $\rk \Hess_{\tilde{f}}^{(q,q+1)}< \dim \tilde{A}_q$. 
\end{lema}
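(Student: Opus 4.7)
The plan is to apply Lemma \ref{lema:algebra} twice to decompose $\tilde A_q$ and $\tilde A_{q+1}$, then express $\Hess_{\tilde f}^{(q,q+1)}$ as a block matrix in the resulting bases, and finally exhibit an explicit row-relation built from a nonzero left-null vector $w$ of $N := \Hess_f^{(q-1,q)}$ (which exists by hypothesis).

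Iterating Lemma \ref{lema:algebra} once with $u$ and once with $v$ (using $U^2, V^2 \in \ann_{\tilde Q}(\tilde f)$), I obtain
\[
\tilde A_q = A_q \oplus A_{q-1}U \oplus A_{q-1}V \oplus A_{q-2}UV, \qquad
\tilde A_{q+1} = A_{q+1} \oplus A_q U \oplus A_q V \oplus A_{q-1}UV.
\]
Using the identities $U(\tilde f) = vf$, $V(\tilde f) = uf$, $UV(\tilde f) = f$, together with the fact that any operator of $x$-degree exceeding $2q = \deg f$ or of $(U,V)$-degree exceeding $(1,1)$ annihilates $\tilde f$, a case analysis of $\alpha\beta(\tilde f)$ for $\alpha$ ranging over the above basis of $\tilde A_q$ and $\beta$ over that of $\tilde A_{q+1}$ yields the block form
\[
\Hess_{\tilde f}^{(q,q+1)} = \begin{bmatrix}
0 & vM & uM & N^T \\
vP & 0 & N & 0 \\
uP & N & 0 & 0 \\
P' & 0 & 0 & 0
\end{bmatrix},
\]
with $M = \Hess_f^{(q,q)}$, $N = \Hess_f^{(q-1,q)}$, $P = \Hess_f^{(q-1,q+1)}$ and $P' = \Hess_f^{(q-2,q+1)}$.

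The row-combination with block coefficients $(0,\, uw,\, -vw,\, 0)$ then kills every column block: in the $A'$-block one gets $uv\, w^T P - uv\, w^T P = 0$; in the $B'$-block one gets $-v\, w^T N = 0$; in the $C'$-block one gets $u\, w^T N = 0$; and the $D'$-block is zero throughout. Since $u,v$ are nonzero in the integral domain $\tilde R$ and $w\neq 0$, this is a nontrivial linear relation among the rows of $\Hess_{\tilde f}^{(q,q+1)}$ over $\tilde R$, whence $\rk \Hess_{\tilde f}^{(q,q+1)} < \dim \tilde A_q$ as claimed.

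The main technical step is assembling the block form correctly: one has to track which entries vanish by operator-degree exceeding $\deg f$, which vanish because $U^2$ or $V^2$ annihilates $\tilde f$, and which carry a factor $u$ or $v$ coming from $U(\tilde f)$ or $V(\tilde f)$. Once the block form is in hand, the symmetry under $u \leftrightarrow v$, $B \leftrightarrow C$, $B' \leftrightarrow C'$ makes the choice $(0, uw, -vw, 0)$ essentially forced, and the argument does not even require the invertibility of $M$ or $P$.
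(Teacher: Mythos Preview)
Your proof is correct and follows essentially the same approach as the paper: both decompose $\tilde A_q$ and $\tilde A_{q+1}$ via two applications of Lemma~\ref{lema:algebra}, write $\Hess_{\tilde f}^{(q,q+1)}$ in block form, and then combine the two $A_{q-1}$-row blocks with coefficients $u$ and $-v$ to cancel the $P$-blocks and leave only multiples of $N=\Hess_f^{(q-1,q)}$. The only cosmetic differences are that your block ordering is the reverse of the paper's, and you make the use of a left null vector $w$ of $N$ explicit, whereas the paper stops after obtaining the combined block row $(0,-vN,uN,0)$ and leaves the final rank-drop step implicit.
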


\begin{proof} Let $R=\K[x_1,\ldots,x_r]$ and $\tilde{R}=\K[x_1,\ldots,x_r,u,v]$ and let $Q,\tilde{Q},A, \tilde{A}$ as usual. Applying twice Lemma \ref{lema:algebra}, we get
 $$\tilde{A}_k=A_{k-2}UV\oplus A_{k-1}U \oplus A_{k-1}V \oplus A_k.$$
 Therefore, 
 $$\Hess^{(q,q+1)}_{\tilde{f}} = \left[\begin{array}{cccc}
   0 &  0 & 0 & \Hess^{(q-2,q+1)}_f \\
   0 & 0 & \Hess^{(q-1,q)}_f & V\Hess_f^{(q-1,q+1)} \\
   0 & \Hess^{(q-1,q)}_f  & 0 & U\Hess_f^{(q-1,q+1)} \\ 
   \Hess^{(q,q-1)}_f & V\Hess_f^q & U\Hess_f^q & 0
   \end{array}\right].
$$

Multiplying the second block row by $U$ and the third one by $-V$ and summing up we got a block row of type.

$$ \left[ \begin{array}{cccc}
   0  & -V\Hess^{(q-1,q)}_f & U \Hess^{(q-1,q)}_f & 0 
  \end{array} \right].
$$

Since, by hypothesis, $\rk \Hess^{(q-1,q)}_f $ is not maximal, the result follows.
\end{proof}

\begin{cor}\label{cor:existempares}
 There exists a standard graded Artinian Gorenstein algebras presented by quadrics of socle degree $d=2k\geq 4$ that does not have the weak Lefschetz property for each codimension $\codim A \geq d+12$. 
\end{cor}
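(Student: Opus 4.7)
The plan is to mirror the odd-degree argument in Corollary \ref{cor:existemimpares}. The base case $d=4$ is exactly Corollary \ref{cor:quarticcase}: for every $r \geq 16$ there is a standard graded Artinian Gorenstein algebra $A_r = Q/\ann_Q(f_r)$ of socle degree $4$ and codimension $r$, presented by quadrics, that fails WLP. For higher even socle degrees $d=2k$, I would pass from socle degree $2q$ to socle degree $2q+2$ by multiplying the Macaulay dual generator by a fresh pair of variables $uv$, since Lemma \ref{lema:truquepar} is tailored exactly to this operation.

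Concretely, for $d = 2k \geq 4$ and any $r \geq 16$, set
$$\tilde f := f_r \cdot u_1 v_1 u_2 v_2 \cdots u_{k-2} v_{k-2} \in \K[x_1,\ldots,x_r,u_1,v_1,\ldots,u_{k-2},v_{k-2}],$$
so that $\deg \tilde f = 4 + 2(k-2) = 2k = d$ and the codimension of $\tilde A = \tilde Q/\ann_{\tilde Q}(\tilde f)$ is $r + 2(k-2) \geq 16 + 2(k-2) = d + 12$. Varying $r \geq 16$ produces examples in every codimension $\geq d+12$, and when $k=2$ the empty product reduces the construction to the base case.

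The two required properties of $\tilde A$ would then follow by induction on $k$. Presentation by quadrics follows from applying Lemma \ref{lema:ideal} once for each adjoined variable: at every step the new ideal is generated by the (inductively quadratic) old generators together with the single new square $U_i^2$ or $V_i^2$. Failure of WLP follows from iterating Lemma \ref{lema:truquepar}: by Corollary \ref{cor:hessiancriteria}(2), the failure of WLP for $A_r$ (socle degree $4 = 2\cdot 2$) is equivalent to $\rk \Hess^{(1,2)}_{f_r}$ being non-maximal; each multiplication by a pair $u_iv_i$ propagates the rank drop of the central mixed Hessian from level $(q-1,q)$ to level $(q,q+1)$; after $k-2$ steps $\Hess^{(k-1,k)}_{\tilde f}$ is still non-maximal, which by Corollary \ref{cor:hessiancriteria}(2) is exactly the failure of WLP in socle degree $2k$.

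The heavy lifting is already absorbed into Lemma \ref{lema:truquepar} and into the Turan-algebra construction underlying Corollary \ref{cor:quarticcase}; the remaining work is essentially bookkeeping on the inductive rank propagation. The only delicate point is that the new variables must be adjoined in pairs: a single multiplication by $u$ would flip the parity of the socle degree and force one to test an odd-degree WLP criterion (a square Hessian) rather than the rectangular mixed Hessian that Lemma \ref{lema:truquepar} actually controls, which is why that lemma is stated for $uv$ and why the inductive step here has to advance the socle degree by $2$ at a time.
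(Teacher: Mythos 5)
Your proposal is correct and follows essentially the same route as the paper, whose proof is simply the remark that the result ``follows by induction in the same way as Corollary \ref{cor:existemimpares}'': base case Corollary \ref{cor:quarticcase}, quadric presentation preserved via Lemma \ref{lema:ideal}, and the rank drop propagated through Lemma \ref{lema:truquepar} by adjoining variables in pairs. Your codimension bookkeeping ($r+2(k-2)\geq 16+2(k-2)=d+12$) and your observation about why the variables must be adjoined two at a time are exactly the intended content.
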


\begin{proof}
 The result follows by induction in the same way as Corollary \ref{cor:existemimpares}.
\end{proof}

{\bf Acknowledgments}.
We wish to thank Francesco Russo and Junzo Watanabe for their suggestions leading to a significant improvement of the paper.

\end{document}